\newtheorem{thm}{Theorem}
\newtheorem{lem}{Lemma}%
\theoremstyle{definition}
\theoremstyle{remark}
\newtheorem{remark}{Remark}[section] %
\theoremstyle{plain}
\numberwithin{equation}{section}
\def\NN{{\mathbb N}}
\def\RR{{\mathbb R}}
\def\TT{{\mathbb T}}
\def\ZZ{{\mathbb Z}}
\def\hatZZ{\widehat{\mathbb Z}}
\def\veca{{\text{\boldmath$a$}}}
\def\uveca{\widehat{\text{\boldmath$a$}}}
\def\cveca{\mathring{\text{\boldmath$a$}}}
\def\vecb{{\text{\boldmath$b$}}}
\def\vece{{\text{\boldmath$e$}}}
\def\vecm{{\text{\boldmath$m$}}}
\def\vecn{{\text{\boldmath$n$}}}
\def\vecp{{\text{\boldmath$p$}}}
\def\vecr{{\text{\boldmath$r$}}}
\def\vecv{{\text{\boldmath$v$}}}
\def\vecx{{\text{\boldmath$x$}}}
\def\vecy{{\text{\boldmath$y$}}}
\def\cvecy{\mathring{\text{\boldmath$y$}}}
\def\vecalf{{\text{\boldmath$\alpha$}}}
\def\veczeta{{\text{\boldmath$\zeta$}}}
\def\vecxi{{\text{\boldmath$\xi$}}}
\def\vecnull{{\text{\boldmath$0$}}}
\def\scrA{{\mathcal A}}
\def\scrB{{\mathcal B}}
\def\scrC{{\mathcal C}}
\def\scrD{{\mathcal D}}
\def\scrF{{\mathcal F}}
\def\scrI{{\mathcal I}}
\def\scrH{{\mathcal H}}
\def\scrK{{\mathcal K}}
\def\scrL{{\mathcal L}}
\def\scrM{{\mathcal M}}
\def\scrP{{\mathcal P}}
\def\scrR{{\mathcal R}}
\def\fC{{\mathfrak C}}
\def\e{\mathrm{e}}
\def\diag{\operatorname{diag}}
\def\minplu{\operatorname{{\min}_+}}
\def\Gr{\operatorname{Gr}}
\def\GL{\operatorname{GL}}
\def\S{\operatorname{S{}}}
\def\SL{\operatorname{SL}}
\def\ASL{\operatorname{ASL}}
\def\SO{\operatorname{SO}}
\def\OOO{\operatorname{O{}}}
\def\tr{\operatorname{tr}}
\def\supp{\operatorname{supp}}
\def\vol{\operatorname{vol}}
\def\GamG{\Gamma\backslash G}
\def\GamGG{\Gamma_0\backslash G_0}
\def\GamH{\Gamma_H\backslash H}
\def\SLZ{\SL(d,\ZZ)}
\def\SLR{\SL(d,\RR)}
\def\trans{\,^\mathrm{t}\!}
\title{The asymptotic distribution of Frobenius numbers}
\author{Jens Marklof}
\address{School of Mathematics, University of Bristol,
Bristol BS8 1TW, U.K.\newline
\rule[0ex]{0ex}{0ex} \hspace{8pt}{\tt j.marklof@bristol.ac.uk}}
\date{18 February 2009/17 December 2009. To appear in Inventiones Mathematicae.}
\thanks{The author is supported by a Royal Society Wolfson Research Merit Award.}
\begin{document}

\begin{abstract}
The Frobenius number $F(\veca)$ of an integer vector $\veca$ with positive coprime coefficients is defined as the largest number that does not have a representation as a positive integer linear combination of the coefficients of $\veca$. We show that if $\veca$ is taken to be random in an expanding $d$-dimensional domain, then $F(\veca)$ has a limit distribution, which is given by the probability distribution for the covering radius of a certain simplex with respect to a $(d-1)$-dimensional random lattice. This result extends recent studies for $d=3$ by Arnold, Bourgain-Sinai and Shur-Sinai-Ustinov. The key features of our approach are (a) a novel interpretation of the Frobenius number in terms of the dynamics of a certain group action on the space of $d$-dimensional lattices, and (b) an equidistribution theorem for a multidimensional Farey sequence on closed horospheres. 
\end{abstract}

\maketitle

\section{Introduction \label{secIntro}}

Let us denote by $\hatZZ^d=\{ \veca=(a_1,\ldots,a_d)\in\ZZ^d : \gcd(a_1,\ldots,a_d)=1 \}$ the set of primitive lattice points, and by $\hatZZ_{\geq 2}^d$ the subset with coefficients $a_j\geq 2$. Given $\veca\in\hatZZ_{\geq 2}^d$, it is well known that any sufficiently large integer $N>0$ can be represented in the form
\begin{equation}\label{rep}
	N=\vecm\cdot\veca
\end{equation}
with $\vecm\in\ZZ_{\geq 0}^d$. Frobenius was interested in the largest integer $F(\veca)$ that fails to have a representation of this type. That is,
\begin{equation}
	F(\veca) = \max \ZZ \setminus\{ \vecm\cdot\veca>0 : \vecm\in\ZZ_{\geq 0}^d \}.
\end{equation}
We will refer to $F(\veca)$ as the {\em Frobenius number} of $\veca$.
In the case of two variables ($d=2$) Sylvester showed that
\begin{equation}\label{sylvester}
	F(\veca)=a_1 a_2-a_1-a_2 .
\end{equation}
No such explicit formulas are known in higher dimensions, cf.~\cite{Ramirez05}, \cite{Rodseth78}, \cite{Selmer78}. The present paper will discuss a new interpretation of the Frobenius number in terms of the dynamics of a certain flow $\Phi^t$ on the space of lattices $\GamG$, with $G:=\SLR$, $\Gamma:=\SLZ$. This dynamical interpretation is a key step in the proof of the following limit theorem on the asymptotic distribution of the Frobenius number $F(\veca)$, where $\veca$ is randomly selected from the set $T\scrD=\{ T\vecx : \vecx\in\scrD\}$, with $T$ large and $\scrD$ a fixed bounded subset of $\RR_{\geq 0}^d$.

\begin{thm}\label{mainThm}
Let $d\geq 3$. There exists a continuous non-increasing function $\Psi_d: \RR_{\geq 0}\to \RR_{\geq 0}$ with $\Psi_d(0)=1$, such that for any bounded set $\scrD\subset\RR_{\geq 0}^d$ with boundary of Lebesgue measure zero, and any $R\geq 0$, 
\begin{equation}\label{mainThmeq}
	\lim_{T\to\infty} \frac{1}{T^d} \#\bigg\{ \veca\in\hatZZ_{\geq 2}^d\cap T\scrD : \frac{F(\veca)}{(a_1\cdots a_d)^{1/(d-1)}} >R \bigg\}
	=\frac{\vol(\scrD)}{\zeta(d)}\; \Psi_d(R) .
\end{equation}
\end{thm}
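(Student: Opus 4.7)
The plan is to combine the two ingredients announced in the abstract: a dynamical reformulation of $F(\veca)$ as a covering radius of a simplex with respect to a $(d-1)$-dimensional lattice, followed by an equidistribution theorem for the associated lattices on closed horospheres.

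\textbf{Step 1: Covering-radius reformulation of $F(\veca)$.} First I would recast $F(\veca)$ geometrically. Fix $\veca \in \hatZZ_{\geq 2}^d$; the integer solutions of $\vecm \cdot \veca = N$ form an affine coset $\vecp_N + L(\veca)$, where $L(\veca) := \{\vecm \in \ZZ^d : \vecm \cdot \veca = 0\}$ is a rank $d-1$ sublattice of $\ZZ^d$, and $N$ is representable in the sense of \eqref{rep} precisely when this coset meets the simplex slice $\{\vecx \in \RR_{\geq 0}^d : \vecx \cdot \veca = N\}$ (a rescaled copy of a fixed simplex determined by $\uveca := \veca/|\veca|$). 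Identify $\veca^\perp$ isometrically with $\RR^{d-1}$ to view $L(\veca)$ as a lattice of covolume $|\veca|$ in $\RR^{d-1}$, then rescale by $(a_1 \cdots a_d)^{-1/(d-1)}$ to bring the covolume to one; a direct computation should then give
\begin{equation*}
\frac{F(\veca)}{(a_1 \cdots a_d)^{1/(d-1)}} = \rho\bigl(\Lambda(\veca),\, S\bigr),
\end{equation*}
where $\Lambda(\veca) \in X_{d-1} := \SL(d-1,\ZZ)\backslash\SL(d-1,\RR)$ is the normalized lattice, $S \subset \RR^{d-1}$ is a standard simplex, and $\rho$ is the (inhomogeneous) covering radius. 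The residual $\uveca$-dependence of the simplex can be absorbed into $\Lambda(\veca)$ by an $\SL(d-1,\RR)$-change of basis.

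\textbf{Step 2: Equidistribution on a closed horosphere.} The second ingredient is an equidistribution statement for the family $\{(\veca/T,\,\Lambda(\veca)) : \veca \in \hatZZ^d \cap T\scrD\}$. These lifted points form a piece of a closed horospherical orbit in $\GamG = \SLSL$, with $\Lambda(\veca)$ providing the transverse coordinate in $X_{d-1}$. The concrete statement needed is that for every bounded continuous $f : \RR^d \times X_{d-1} \to \RR$,
\begin{equation*}
\lim_{T \to \infty} \frac{1}{T^d} \sum_{\veca \in \hatZZ^d \cap T\scrD} f\!\left(\tfrac{\veca}{T},\, \Lambda(\veca)\right) = \frac{1}{\zeta(d)} \int_{\scrD} \int_{X_{d-1}} f(\vecx,\,\Lambda)\, d\mu(\Lambda)\, d\vecx,
\end{equation*}
where $\mu$ is the normalized Haar probability measure on $X_{d-1}$ and $\zeta(d)^{-1}$ is the density of primitive lattice points in $\RR^d$. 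I would derive this from the general theory of equidistribution of expanding/closed horospheres in $\GamG$ (Dani--Smillie, Eskin--McMullen, after Ratner), together with quantitative non-divergence estimates of Kleinbock--Margulis type to handle unbounded test functions.

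\textbf{Step 3: Assembling the limit.} Having Step 2, I would define
\begin{equation*}
\Psi_d(R) := \mu\bigl\{\Lambda \in X_{d-1} : \rho(\Lambda, S) > R\bigr\}
\end{equation*}
and apply the equidistribution theorem with $f(\vecx,\Lambda) = \mathbf{1}_\scrD(\vecx)\,\mathbf{1}[\rho(\Lambda, S) > R]$ to read off \eqref{mainThmeq}. Monotonicity of $\Psi_d$ is immediate from the definition, $\Psi_d(0) = 1$ because $\rho > 0$, and continuity should follow from the fact that $\rho(\cdot, S)$ has no Haar level set of positive measure (a consequence of the transitivity of the $\SL(d-1,\RR)$-action). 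Swapping $\hatZZ_{\geq 2}^d$ for $\hatZZ^d$ costs only $O(T^{d-1})$ vectors and is irrelevant at leading order, while approximating the indicator $\mathbf{1}[\rho > R]$ by continuous compactly supported functions requires the same cusp non-divergence estimate from Step 2.

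\textbf{Main obstacle.} I expect the bulk of the work to lie in Step 2. The general paradigm for horospherical equidistribution is classical, but two specific points will need care: (i) constructing the explicit parameterization that sends primitive vectors $\veca \in T\scrD$ into pieces of a closed horospherical orbit, and (ii) establishing quantitative non-divergence of these orbit pieces in the cusp of $X_{d-1}$. Point (ii) is indispensable because the covering radius $\rho(\cdot, S)$ is unbounded on $X_{d-1}$, so integrating $\mathbf{1}[\rho > R]$ against orbit averages is only legitimate once one controls the mass escaping to infinity. Step 1 is a concrete geometric reformulation and Step 3 is bookkeeping; everything ultimately hinges on the equidistribution theorem of Step 2.
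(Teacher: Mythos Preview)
Your outline matches the paper's architecture (Sections \ref{secDyn}--\ref{secCon}): a covering-radius reformulation of $F(\veca)$, equidistribution of the associated $(d-1)$-dimensional lattices via horospherical dynamics in $\GamG$, and a Portmanteau argument to pass to the indicator. Two points where your diagnosis of the technical difficulties is off. First, the test function $\mathbf{1}[\rho>R]$ is \emph{bounded}, so no Kleinbock--Margulis non-divergence input is needed to integrate it against orbit averages; the sole obstruction in passing from bounded continuous $f$ to this indicator is that its discontinuity locus $\{\rho=R\}$ be $\mu_0$-null, and that is precisely the continuity of $\Psi_d$---so this, not escape of mass, is where the substantive work in Step~3 lies. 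Second, your proposed justification for that nullity (``transitivity of the $\SL(d-1,\RR)$-action'') does not work: a constant function on a transitive homogeneous space has a full-measure level set, so transitivity alone proves nothing. The paper's argument (Lemma~\ref{0lem}) instead uses the maximal-lattice-free-simplex characterization of the event $\rho(A)=R$ (Lemma~\ref{lemSta}) to embed $\{A:\rho(A)=R\}$ in a countable union of codimension-one varieties $\{A\in G_0:\tr(LA)=R\}$ indexed by integer matrices $L$, each of which is $\mu_0$-null.
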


Variants of Theorem \ref{mainThm} were previously known only in dimension $d=3$, cf.~\cite{Bourgain07}, \cite{Shur}; see also \cite{Arnold99}, \cite{Arnold07} for related studies and open conjectures, and \cite{Aliev08}, \cite{Bourgain07} for results in higher dimensions. The scaling of $F(\veca)$ used in Theorem \ref{mainThm} is consistent with numerical experiments \cite[Section 5]{Beihoffer05}.

We will furthermore establish that the limit distribution $\Psi_d(R)$ is given by the distribution of the covering radius of the simplex
\begin{equation}\label{simplex}
	\Delta= \big\{ \vecx\in\RR_{\geq 0}^{d-1} : \vecx\cdot\vece\leq 1 \big\},\qquad
	\vece:=(1,1,\ldots,1),
\end{equation}
with respect to a random lattice in $\RR^{d-1}$. Here, the {\em covering radius} (sometimes also called {\em inhomogeneous minimum}) of a set $K\subset\RR^{d-1}$ with respect to a lattice $\scrL\subset\RR^{d-1}$ is defined as the infimum of all $\rho> 0$ with the property that $\scrL+\rho K=\RR^{d-1}$. 

To state this result precisely, let $\ZZ^{d-1}A$ be a lattice in $\RR^{d-1}$ with $A\in G_0:=\SL(d-1,\RR)$. The {\em space of lattices} (of unit covolume) is $\GamGG$ with $\Gamma_0:=\SL(d-1,\ZZ)$. We denote by $\mu_0$ the unique $G_0$-right invariant probability measure on $\GamGG$; an explicit formula for $\mu_0$ is given in Section \ref{secFarey}. 

\begin{thm}\label{mainThm2}
Let $\rho(A)$ be the covering radius of the simplex $\Delta$ with respect to the lattice $\ZZ^{d-1}A$. Then 
\begin{equation}\label{notrivi}
	\Psi_d(R) = \mu_0 \big(\big\{ A\in\GamGG : \rho(A)> R \big\}\big).
\end{equation}
\end{thm}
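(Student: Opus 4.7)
The plan is to prove Theorem \ref{mainThm2} in tandem with Theorem \ref{mainThm}, by (i) reformulating $F(\veca)$ exactly as a dilated covering radius of the simplex $\Delta$ with respect to a $(d-1)$-dimensional unit-covolume lattice $\ZZ^{d-1}A(\veca)$ canonically attached to $\veca$, and (ii) appealing to the horospherical equidistribution theorem for the multidimensional Farey sequence announced in the abstract.

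\smallskip
\textbf{Step 1: Covering-radius formula for $F(\veca)$.} The integer $N$ admits a representation (\ref{rep}) iff the affine lattice $\Lambda_N:=\{\vecm\in\ZZ^d:\vecm\cdot\veca=N\}$ meets $\RR_{\geq 0}^d$. The slice $\RR_{\geq 0}^d\cap\{\vecx\cdot\veca=N\}$ is the simplex with vertices $(N/a_j)\vece_j$, which under a suitable affine identification with $\RR^{d-1}$ becomes the dilate $R_N\Delta$ with $R_N:=N(a_1\cdots a_d)^{-1/(d-1)}$; under the same identification the projection of $\Lambda_0$ becomes a unit-covolume lattice $\ZZ^{d-1}A(\veca)$, with $A(\veca)\in G_0$ determined modulo $\Gamma_0$. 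Non-representability of $N$ then reads: a specific coset $\vecw_N+\ZZ^{d-1}A(\veca)$ misses $R_N\Delta$. Since the cosets $\vecw_N\in\RR^{d-1}/\ZZ^{d-1}A(\veca)$ form an arithmetic progression that is equidistributed as $N\to\infty$, and the simplex dilates linearly in $N$, the supremum of such $N$ is
\begin{equation*}
F(\veca)=(a_1\cdots a_d)^{1/(d-1)}\,\rho(A(\veca))+O(\|\veca\|),
\end{equation*}
the $O(\|\veca\|)$ shift (of Sylvester type) being dwarfed by the main scaling for $d\geq 3$ and dropping out of the asymptotics.

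\smallskip
\textbf{Step 2: Equidistribution of $A(\veca)$.} The matrix $A(\veca)$ is the projection to $\GamGG$ of a point on a closed horosphere in $\GamG$ indexed by $\veca$: the rescaled primitive vectors $\{T^{-1}\veca:\veca\in\hatZZ^d\cap T\scrD\}$ form a discrete subset of such a horosphere (the stabilizer of the $\veca$-direction under the right $G$-action on row vectors being an affine $\SL(d-1,\RR)$), and the horospherical fibration realizes $A(\veca)$ as its image in $\GamGG$. The paper's Farey equidistribution theorem then gives, for any bounded $f:\GamGG\to\RR$ continuous off a $\mu_0$-null set,
\begin{equation*}
\frac{1}{T^d}\sum_{\veca\in\hatZZ^d\cap T\scrD}f(A(\veca))\longrightarrow\frac{\vol(\scrD)}{\zeta(d)}\int_{\GamGG}f\,d\mu_0,
\end{equation*}
the factor $1/\zeta(d)$ being the asymptotic density of primitive vectors. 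Combining with Step 1 and choosing $f=\mathbf{1}_{\{\rho>R\}}$ simultaneously yields existence of the limit in (\ref{mainThmeq}) and the identification (\ref{notrivi}); replacing $\hatZZ^d$ by $\hatZZ_{\geq 2}^d$ excises only $O(T^{d-1})$ vectors, hence is harmless.

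\smallskip
\textbf{Main obstacle.} The hard step is Step 2: proving the Farey equidistribution on the closed horosphere with the correct normalization of $\mu_0$ is the technical heart of the argument. Step 1 is elementary but demands care to realize $A(\veca)$ as a bona fide point on exactly the horosphere governed by Step 2, and to keep track of the affine shift $\vecw_N$. To apply Step 2 to the discontinuous test function $\mathbf{1}_{\{\rho>R\}}$ one must also verify that $\rho:\GamGG\to\RR_{>0}$ is continuous and that each level set $\{\rho=R\}$ carries zero $\mu_0$-mass; both follow from a piecewise-linear description of $\rho(A)$ in terms of the combinatorics of $\ZZ^{d-1}A$. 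Continuity and monotonicity of $\Psi_d$, as well as the normalization $\Psi_d(0)=1$ claimed in Theorem \ref{mainThm}, are then immediate from the corresponding properties of $R\mapsto\mu_0\{\rho>R\}$.
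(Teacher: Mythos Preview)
Your proposal is correct and matches the paper's strategy: Step 1 corresponds to Sections~2 and~5--6 (Frobenius number as scaled covering radius, up to $O(\|\veca\|)$), Step 2 to Sections~3--4 (Farey equidistribution on closed horospheres), and your ``Main obstacle'' remarks to Section~7 (continuity of $\rho$ and $\mu_0$-null level sets). The only structural difference is that the paper routes Step~1 through an auxiliary $\Gamma$-invariant function $W_\delta$ on $\GamG$ and a function $V$ on $\GamGG$, defining $\Psi_d(R):=\mu_0\{V>R\}$ and then proving $V=\rho$ as a separate lemma, whereas you work directly with $\rho$ from the outset.
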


The connection between Frobenius numbers and lattice free simplices is well understood \cite{Kannan92}, \cite{Scarf93}. In particular, Theorem \ref{mainThm2} connects nicely to the sharp lower bound of \cite{Aliev07} (see also \cite{Rodseth90}):
\begin{equation}
	\frac{F(\veca)}{(a_1\cdots a_d)^{1/(d-1)}} \geq \rho_*,\qquad \text{with }
	\rho_* := \inf_{A\in\GamGG} \rho(A) .
\end{equation}
It is proved in \cite{Aliev07} that $\rho_*>((d-1)!)^{1/(d-1)}>0$, and so in particular
\begin{equation}\label{constant}
\Psi_d(R)=1 \qquad \text{for} \quad 0\leq R < \rho_*.	
\end{equation}
An explicit formula for $\Psi_d(R)$ has recently been derived in dimension $d=3$ by different techniques, cf.~\cite{Shur}. In this case $\rho_*=\sqrt{3}$.

It is amusing to note that all of the above statements also hold in the trivial case $d=2$, except for the continuity of the limit distribution:
By Sylvester's formula \eqref{sylvester}
\begin{equation}\label{trivi}
\Psi_2(R)=
\begin{cases}
1 & (R<1) \\
0 & (R\geq 1) .
\end{cases}	
\end{equation}
The covering radius of the simplex $\Delta=[0,1]$ with respect to the lattice $\ZZ$ is $\rho(1)=1$. $\ZZ$ is of course the unique element in the space of one-dimensional lattices of unit covolume, and hence \eqref{trivi} follows also formally from \eqref{notrivi}.

We now give a brief outline of the paper. Section \ref{secDyn} explains the aforementioned dynamical interpretation of the Frobenius number in terms of the right action of a one-parameter subgroup $\Phi^t$ on the space of lattices $\GamG$: We show that there is a function $W_\delta$ of $\GamG$ that produces, when evaluated along a certain orbit of $\Phi^t$, the Frobenius number $F(\veca)$. 
This observation is the crucial step in the application of an equidistribution theorem for multidimensional Farey sequences on closed horospheres in
$\GamG$, which is proved in Section \ref{secFarey}. A useful variant of this theorem is discussed in Section \ref{secVariant}. Section \ref{secUpp} exploits the equidistribution theorem to give upper and lower bounds for the lim sup and lim inf of \eqref{mainThmeq}, respectively, and the purpose of the remaining Sections \ref{secLat} and \ref{secCon} is to show that the lim sup and lim inf coincide. This is achieved by relating the limit distribution $\Psi_d(R)$ to the covering radius of a simplex with respect to a random lattice (Section \ref{secLat}), and proving that $\Psi_d(R)$ is continuous (Section \ref{secCon}).

The results of Sections \ref{secFarey} and \ref{secVariant} provide a new approach to Schmidt's work \cite{Schmidt98} on the distribution of (primitive) sublattices of $\ZZ^d$. Appendix \ref{AppA} illuminates this connection by deriving a generalization of Schmidt's Theorem 3 in the case of primitive sublattices of rank $d-1$.

\section{Dynamical interpretation \label{secDyn}}

Let $G:=\SLR$ and $\Gamma:=\SLZ$, and define
\begin{equation}
	n_+(\vecx)=\begin{pmatrix} 1_{d-1} & \trans\vecnull \\ \vecx & 1 \end{pmatrix},\qquad
	n_-(\vecx)=\begin{pmatrix} 1_{d-1} & \trans\vecx \\ \vecnull & 1 \end{pmatrix},\qquad \Phi^t=\begin{pmatrix} \e^{-t} 1_{d-1} & \trans\vecnull \\ \vecnull & \e^{(d-1)t} \end{pmatrix}.
\end{equation}
The right action
\begin{equation}
	\GamG \to \GamG, \qquad \Gamma M \mapsto \Gamma M \Phi^t
\end{equation}
defines a flow on the space of lattices $\GamG$.  
The horospherical subgroups generated by $n_+(\vecx)$ and $n_-(\vecx)$ parametrize the stable and unstable directions of the flow $\Phi^t$ as $t\to\infty$. This can be seen as follows. Let $d: G\times G\to \RR_{\geq 0}$ be a left $G$-invariant Riemannian metric on $G$, i.e., $d(h M, hM')=d(M,M')$ for all $h,M,M'\in G$. We may choose $d$ in such a way that 
\begin{equation}
	d\big(n_\pm(\vecx),n_\pm(\vecx')\big)\leq \| \vecx-\vecx'\| ,
\end{equation}
where $\|\,\cdot\,\|$ the standard euclidean norm.
Note that $n_-(\vecx)\Phi^t=\Phi^t n_-(\e^{dt}\vecx)$. Hence, for any $M\in G$,
\begin{equation}\label{expon}
	d\big(M n_-(\vecx)\Phi^t,M \Phi^t\big)
	= d\big(M \Phi^t n_-(\e^{dt} \vecx),M \Phi^t\big)
	= d\big(n_-(\e^{dt} \vecx), 1_d \big)
	\leq \e^{dt} \| \vecx \| ,
\end{equation}
which explains the interpretation of $n_-(\vecx)$ as an element in the {\em unstable} horospherical subgroup. The argument for $n_+(\vecx)$ as the stable analogue is identical. 

In the following we will represent functions on $\GamG$ as left $\Gamma$-invariant functions on $G$, i.e., functions $f:G\to\RR$ that satisfy $f(\gamma M)=f(M)$ for all $\gamma\in\Gamma$.
The left $G$-invariant metric $d(\,\cdot\,,\,\cdot\,)$ yields thus a Riemannian metric $d_\Gamma(\,\cdot\,,\,\cdot\,)$ on $\GamG$ by setting
\begin{equation}\label{m1}
	d_\Gamma(M,M'):=\min_{\gamma\in\Gamma} d(M,\gamma M') .
\end{equation}
Indeed, the left $G$-invariance of $d$ implies $d_\Gamma(\gamma M,M')=d_\Gamma(M,M')=d_\Gamma(M,\gamma M')$ for any $\gamma\in\Gamma$.

The aim of the present section is to identify a function $W_\delta$ on $\GamG$ that, when evaluated along a specific orbit of the flow $\Phi^t$, produces the Frobenius number. (As we shall see below, the situation is slightly more complicated in that $W_\delta$ also depends on additional variables in $\RR^{d-1}$.)  

We will assume throughout that $\veca\in\hatZZ_{\geq 2}^d$. Following  \cite{Brauer62}, \cite{Selmer77} we reduce the Frobenius problem modulo $a_d$. For $r\in\ZZ/a_d\ZZ$ set
\begin{equation}
	F_r(\veca) = \max (r+a_d\ZZ)\setminus\{ \vecm\cdot\veca>0 : \vecm\in\ZZ_{\geq 0}^d,\; \vecm\cdot\veca\equiv r \bmod a_d \}
\end{equation}
Then
\begin{equation}\label{Fmax}
	F(\veca)=\max_{r\bmod a_d} F_r(\veca).
\end{equation}
Consider the smallest positive integer that has a representation in $r\bmod a_d$,
\begin{equation}
	N_r(\veca) = \min\{ \vecm\cdot\veca>0 : \vecm\in\ZZ_{\geq 0}^{d},\;\vecm\cdot\veca\equiv r \bmod a_d \} . 
\end{equation}
Then $F_r(\veca)=N_r(\veca)-a_d$. We have in fact
\begin{equation}
	N_r(\veca) =
	\begin{cases}
	a_d & (r\equiv 0\bmod a_d) \\ 
	 \min\{ \vecm'\cdot\veca': \vecm'\in\ZZ_{\geq 0}^{d-1},\;\vecm'\cdot\veca'\equiv r \bmod a_d \} & (r\not\equiv 0\bmod a_d) 
	 \end{cases}
\end{equation}
with $\veca'=(a_1,\ldots,a_{d-1})$. In view of \eqref{Fmax} we conclude
\begin{equation}\label{keyEq}
	F(\veca)=\max_{r\not\equiv 0 \bmod a_d} N_r(\veca)  -a_d.
\end{equation}

We assume in the following $a_1,\ldots,a_{d-1}\leq a_d\leq T$, and $0<\delta\leq \frac12$. For $r\not\equiv 0 \bmod a_d$ we then have
\begin{equation}
	N_r(\veca)  = \min\bigg\{ \vecm' \cdot \veca' :
	\vecm\in\ZZ_{\geq 0}^{d-1}\times\ZZ,\; \big|\vecm\cdot \veca - r \big| < \frac{\delta a_d}{T} \bigg\} .
\end{equation}
For $\vecxi=(\vecxi',\xi_d)\in\TT^d=\RR^d/\ZZ^d$, set
\begin{equation}
	N(\veca,\vecxi,T):=\minplu\bigg\{(\vecm'+\vecxi') \cdot\veca' :
	\vecm+\vecxi\in(\ZZ^d+\vecxi)\cap\RR_{\geq 0}^{d-1}\times\RR ,\; \big|(\vecm+\vecxi)\cdot \veca  \big| < \frac{\delta a_d}{T} \bigg\},
\end{equation}
where $\minplu$ is defined by 
\begin{equation}
	\minplu \scrA = \begin{cases} \min \scrA\cap\RR_{\geq 0} & (\scrA\cap\RR_{\geq 0}\neq\emptyset)\\ 0 & (\scrA\cap\RR_{\geq 0}=\emptyset) . \end{cases}
\end{equation}
It is evident that $N(\veca,\vecxi,T)$ is indeed well defined as a function of $\vecxi\in\TT^d$, and furthermore $N_r(\veca)=N(\veca,(\vecnull,-\frac{r}{a_d}),T)$.

\begin{lem}\label{lem1}
Let $\veca=(a_1,\ldots,a_d)\in\hatZZ_{\geq 2}^d$ with $a_1,\ldots,a_{d-1}\leq a_d\leq T$, $0<\delta\leq\frac12$. Then
\begin{equation}
	F(\veca)
	=\sup_{\vecxi\in\RR^d/\ZZ^d}  N(\veca,\vecxi,T) -\vece\cdot\veca  ,
\end{equation}
where $\vece=(1,1,\ldots,1)$.
\end{lem}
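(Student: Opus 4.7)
The plan is to work with the canonical representative $\vecxi\in[0,1)^d$ of the coset, in which case the positivity constraint $\vecm+\vecxi\in\RR_{\geq 0}^{d-1}\times\RR$ forces $\vecm'\in\ZZ_{\geq 0}^{d-1}$ (since $m_j\in\ZZ$ and $-\xi_j\in(-1,0]$) while $m_d\in\ZZ$ is unconstrained. Setting $c:=-\vecxi\cdot\veca$, the smallness condition reads $|\vecm\cdot\veca-c|<\delta a_d/T\leq\tfrac12$, and since $\vecm\cdot\veca\in\ZZ$ this pins $\vecm\cdot\veca$ to be the unique integer $r=r(\vecxi)$ nearest to $c$ (if one lies within $\delta a_d/T$; otherwise the admissible set is empty and $N(\veca,\vecxi,T)=0$). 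The identity $\vecm\cdot\veca=r$ then determines $m_d=(r-\vecm'\cdot\veca')/a_d$ and enforces the residue condition $\vecm'\cdot\veca'\equiv r\bmod a_d$.

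With this reduction one obtains
\begin{equation*}
N(\veca,\vecxi,T) \;=\; \vecxi'\cdot\veca' \;+\; \min\{\vecm'\cdot\veca':\vecm'\in\ZZ_{\geq 0}^{d-1},\;\vecm'\cdot\veca'\equiv r\bmod a_d\},
\end{equation*}
and the inner minimum equals $N_r(\veca)$ when $r\not\equiv 0\bmod a_d$, and equals $0$ when $r\equiv 0\bmod a_d$ (attained at $\vecm'=\vecnull$). Because $\vecxi'\in[0,1)^{d-1}$ gives $\vecxi'\cdot\veca'<a_1+\cdots+a_{d-1}$ strictly, combining this with \eqref{keyEq} yields the uniform bound $N(\veca,\vecxi,T)<F(\veca)+\vece\cdot\veca$ for every $\vecxi$, and hence $\sup_\vecxi N(\veca,\vecxi,T)\leq F(\veca)+\vece\cdot\veca$.

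For the matching lower bound I would choose $r^*$ attaining $\max_{r\not\equiv 0\bmod a_d}N_r(\veca)$, fix $\xi_d\in[0,1)$ so that the nearest integer to $-\vecxi\cdot\veca$ lies in the residue class $r^*\bmod a_d$, and then push $\vecxi'$ to the boundary by writing $\vecxi'=(1,\ldots,1)-\veceps$ with $\veceps\to\vecnull$ componentwise through positive entries. For $\|\veceps\|$ small the nearest-integer mechanism of the first paragraph still selects an integer congruent to $r^*$ modulo $a_d$, so the displayed formula gives $N(\veca,\vecxi,T)=N_{r^*}(\veca)+\vecxi'\cdot\veca'\to F(\veca)+\vece\cdot\veca$ as $\veceps\to\vecnull$. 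Combining the two bounds proves the lemma. The only delicate point is confirming this simultaneous tuning—that a single choice of $\xi_d$ keeps the selected residue equal to $r^*$ while $\vecxi'$ is independently pushed to the boundary of $[0,1)^{d-1}$—which reduces to an elementary one-dimensional adjustment of $\xi_d$ modulo $1$ using $a_d\geq 2$.
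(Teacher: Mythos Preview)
Your approach is essentially the paper's, and the upper bound is fine. The lower-bound paragraph, however, is muddled as written. You say you ``fix $\xi_d$'' and then claim that ``for $\|\veceps\|$ small the nearest-integer mechanism \ldots\ still selects an integer congruent to $r^*$ modulo $a_d$''; but with $\xi_d$ held fixed, varying $\vecxi'$ moves $-\vecxi\cdot\veca$ continuously and the nearest integer (hence its residue class) genuinely jumps. Your closing remark about an ``adjustment of $\xi_d$'' is the right repair --- for each $\vecxi'$ one picks $\xi_d$ afresh so that $-\vecxi'\cdot\veca'-\xi_d a_d$ lands within $\delta a_d/T$ of an integer $\equiv r^*\bmod a_d$, which is always possible since the values swept as $\xi_d$ runs over $[0,1)$ form an interval of length $a_d$ --- but this contradicts the ``fix $\xi_d$'' and ``single choice of $\xi_d$'' formulations earlier in the same paragraph.

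The paper avoids this entirely with one substitution: replace $\xi_d$ by $\xi_d-\vecxi'\cdot\veca'/a_d$, after which the smallness constraint becomes $|\vecm\cdot\veca+\xi_d a_d|<\delta a_d/T$, independent of $\vecxi'$. The suprema over $\vecxi'\in[0,1)^{d-1}$ and over $\xi_d$ then decouple completely, the former contributing exactly $\vece\cdot\veca'$ and the latter (via the same nearest-integer reduction you use) reducing to $\max_{s\bmod a_d}$, which gives $F(\veca)+a_d$ directly from \eqref{keyEq}. No separate upper/lower bound argument or simultaneous tuning is needed.
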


\begin{proof}
Substituting $\xi_d$ by $\xi_d-\vecxi'\cdot\frac{\veca'}{a_d}$, we have
\begin{equation}\label{NNAA}
\begin{split}
&\sup_{\vecxi\in\RR^d/\ZZ^d}  N(\veca,\vecxi,T) \\
& = \sup_{\substack{\vecxi'\in[0,1)^{d-1}\\ \xi_d\in\TT^1}}
\minplu\bigg\{(\vecm'+\vecxi') \cdot\veca' :
	\vecm+\vecxi\in(\ZZ^d+\vecxi)\cap\RR_{\geq 0}^{d-1}\times\RR ,\; \big|\vecm\cdot \veca +\xi_d a_d \big| < \frac{\delta a_d}{T} \bigg\}\\
& = \sup_{\substack{\vecxi'\in[0,1)^{d-1}\\ \xi_d\in\TT^1}}
\minplu\bigg\{(\vecm'+\vecxi') \cdot\veca' :
	\vecm\in\ZZ_{\geq 0}^{d-1}\times\ZZ,\; \big|\vecm\cdot \veca +\xi_d a_d \big| < \frac{\delta a_d}{T} \bigg\} \\
	& =  \sup_{\xi_d\in\TT^1}
\minplu\bigg\{\vecm' \cdot\veca' :
	\vecm\in\ZZ_{\geq 0}^{d-1}\times\ZZ,\; \big|\vecm\cdot \veca +\xi_d a_d \big| < \frac{\delta a_d}{T} \bigg\} + \vece\cdot\veca'  ,
	\end{split}
\end{equation}
where $\vece=(1,1,\ldots,1)$.
The second equality follows from the fact that for $1\leq j<d$, $m_j+\xi_j\geq 0$ implies $m_j\geq 0$ since $m_j\in\ZZ$ and $\xi_j\in[0,1)$. We observe that, since $\frac{\delta a_d}{T}\leq \frac12$ and $\vecm\cdot\veca\in\ZZ$, we can replace in the inequality 
$|\vecm\cdot \veca +\xi_d a_d | < \frac{\delta a_d}{T}$ the quantity $\xi_d a_d$ by its nearest integer, say $s$. That is, \eqref{NNAA} equals
\begin{equation}
\sup_{s \bmod a_d}
\minplu\bigg\{\vecm' \cdot\veca' :
	\vecm\in\ZZ_{\geq 0}^{d-1}\times\ZZ,\; \big|\vecm\cdot \veca + s \big| < \frac{\delta a_d}{T} \bigg\} + \vece\cdot\veca' .
\end{equation}
The case $s\equiv 0\bmod a_d$ does not contribute (because then $\vecm=\vecnull$ achieves 0 as minimum). Since $0\leq a_j\leq a_d$ we thus obtain
\begin{equation}
	\max_{r\not\equiv 0 \bmod a_d} N_r(\veca)
	=\sup_{\vecxi\in\RR^d/\ZZ^d}  N(\veca,\vecxi,T) - \vece\cdot\veca',
\end{equation}
and the lemma follows from \eqref{keyEq}.
\end{proof}

Let $W_\delta$ denote the function $\RR_{\geq 0}^{d-1}\times G\to\RR$, $(\vecalf,M)\mapsto W_\delta(\vecalf,M)$, given by
\begin{equation}\label{fdef}
	W_\delta(\vecalf,M)= \sup_{\vecxi\in\TT^d} \minplu\big\{ (\vecm+\vecxi)M \cdot(\vecalf,0) :
	\vecm\in\ZZ^{d},\; (\vecm+\vecxi)M\in \scrR_\delta \big\} 
\end{equation}
where $\scrR_\delta=\RR_{\geq 0}^{d-1}\times (-\delta,\delta)$.
Note that for every $\gamma\in\Gamma$
\begin{equation} \label{Winv}
\begin{split}
	W_\delta(\vecalf,\gamma M) & = \sup_{\vecxi\in\TT^d} \minplu\big\{ (\vecm+\vecxi)\gamma M \cdot(\vecalf,0) :
	\vecm\in\ZZ^{d},\; (\vecm+\vecxi)\gamma M\in \scrR_\delta \big\} \\
	& = \sup_{\vecxi\in \TT^d\gamma} \minplu\big\{ (\vecm+\vecxi)M \cdot(\vecalf,0) :
	\vecm\in\ZZ^{d}\gamma ,\; (\vecm+\vecxi)M\in \scrR_\delta \big\} .
\end{split}
\end{equation}
Both $\ZZ^d$ and $\TT^d$ are $\Gamma$-invariant; thus
\begin{equation}\label{leftW}
	W_\delta(\vecalf,\gamma M)=W_\delta(\vecalf,M)
\end{equation}
for all $\vecalf\in\RR_{\geq 0}^{d-1}$, $M\in G$ and $\gamma\in\Gamma$. 

Combining Definition \eqref{fdef} with Lemma \ref{lem1} (set $t=\frac{\log T}{d-1}$) we obtain:

\begin{thm}\label{WThm}
Let $\veca=(a_1,\ldots,a_d)\in\hatZZ_{\geq 2}^d$ with $a_1,\ldots,a_{d-1}\leq a_d\leq \e^{(d-1)t}$, and $0<\delta\leq\frac12$. Then
\begin{equation}
	F(\veca)
	= \e^{t} W_\delta\big(\veca',n_-(\uveca)\Phi^t\big) - \vece\cdot\veca ,
\end{equation}
where
\begin{equation}\label{uveca}	\uveca=\frac{\veca'}{a_d}=\bigg(\frac{a_1}{a_{d}},\ldots,\frac{a_{d-1}}{a_{d}}\bigg).
\end{equation}
\end{thm}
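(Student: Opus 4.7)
The plan is to deduce the theorem from Lemma \ref{lem1} by a direct matrix computation. Setting $T := \e^{(d-1)t}$, the hypothesis $a_d \leq \e^{(d-1)t}$ becomes $a_d \leq T$, so Lemma \ref{lem1} applies and yields
\[
F(\veca) = \sup_{\vecxi \in \TT^d} N(\veca, \vecxi, T) - \vece \cdot \veca.
\]
The theorem therefore reduces to the identity $\sup_{\vecxi} N(\veca, \vecxi, T) = \e^t\, W_\delta(\veca', n_-(\uveca)\Phi^t)$, which I would verify by unwinding the definition of $W_\delta$.

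First I would compute the block form of the relevant group element:
\[
M := n_-(\uveca)\Phi^t = \begin{pmatrix} \e^{-t}\, 1_{d-1} & \e^{(d-1)t}\,{}^t\uveca \\ \vecnull & \e^{(d-1)t} \end{pmatrix},
\]
so that for any row vector $\vecy = (\vecy', y_d) \in \RR^{d-1} \times \RR$,
\[
\vecy M = \bigl(\e^{-t}\vecy',\; \e^{(d-1)t}(\vecy'\cdot\uveca + y_d)\bigr) = \Bigl(\e^{-t}\vecy',\; \tfrac{T}{a_d}\,\vecy \cdot \veca\Bigr),
\]
using $\uveca = \veca'/a_d$. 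Specializing to $\vecy = \vecm + \vecxi$, the membership $\vecy M \in \scrR_\delta = \RR_{\geq 0}^{d-1} \times (-\delta,\delta)$ becomes
\[
\vecm' + \vecxi' \in \RR_{\geq 0}^{d-1}, \qquad \bigl|(\vecm+\vecxi)\cdot\veca\bigr| < \frac{\delta a_d}{T},
\]
which is precisely the constraint set defining $N(\veca, \vecxi, T)$ (the condition $\vecm+\vecxi \in \ZZ^d + \vecxi$ being equivalent to $\vecm \in \ZZ^d$, and the positivity condition being unaffected by multiplication by the positive scalar $\e^{-t}$).

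Finally, since $\vecy M \cdot (\veca', 0) = \e^{-t}\vecy'\cdot\veca'$ and the positive factor $\e^{-t}$ can be pulled outside the $\minplu$, I obtain
\[
W_\delta(\veca', M) = \e^{-t} \sup_{\vecxi\in\TT^d} N(\veca, \vecxi, T),
\]
which, combined with Lemma \ref{lem1}, yields the claimed identity. There is essentially no serious obstacle: the argument is a straightforward change of variables. The only care required is in carefully matching the two different-looking parametrizations of the admissible set (via $\vecm \in \ZZ^d$ with a shifted positivity condition in $N$, versus $\vecm+\vecxi$ mapped by $M$ into $\scrR_\delta$ in $W_\delta$), and in checking that the outer supremum over $\vecxi \in \TT^d$ is literally the same in both expressions.
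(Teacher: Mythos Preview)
Your proposal is correct and follows exactly the approach the paper indicates: the paper itself simply states ``Combining Definition \eqref{fdef} with Lemma \ref{lem1} (set $t=\frac{\log T}{d-1}$)'' without further detail, and your matrix computation is precisely the unwinding that makes this one-line justification explicit. The block computation of $n_-(\uveca)\Phi^t$, the identification of the constraint $(\vecm+\vecxi)M\in\scrR_\delta$ with the defining conditions of $N(\veca,\vecxi,T)$, and the homogeneity of $\minplu$ under the positive scalar $\e^{-t}$ are all correct.
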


\section{Farey sequences on horospheres\label{secFarey}}

Denote by
$\mu=\mu_{G}$ the Haar measure on $G=\SLR$, normalized so that it represents the unique right $G$-invariant probability measure on the homogeneous space $\GamG$, where $\Gamma=\SLZ$. By Siegel's volume formula
\begin{equation} \label{siegel}
d\mu(M) \frac{dt}t = \big(\zeta(2)\zeta(3)\cdots\zeta(d)\big)^{-1}\;\det (X)^{-d}
\prod_{i,j=1}^d dX_{ij},
\end{equation}
where $X=(X_{ij})=t^{1/d}M \in \GL^+(d,\RR)$ with $M\in G$, $t>0$, cf.~\cite{Marklof00}, \cite{siegel}. 
We will also use the notation $\mu_0$ for the right $G_0$-invariant probability measure on $\GamGG$, with $G_0=\SL(d-1,\RR)$ and $\Gamma_0=\SL(d-1,\ZZ)$.

Consider the subgroups
\begin{equation}\label{Hdef}
	H = \bigg\{ M\in G :  (\vecnull,1)M =(\vecnull,1) \bigg\}= \bigg\{ \begin{pmatrix} A  & \trans\vecb \\ \vecnull & 1 \end{pmatrix} : A\in G_0,\; \vecb\in\RR^{d-1} \bigg\}
\end{equation}
and
\begin{equation}
	\Gamma_H = \Gamma\cap H  = \bigg\{ \begin{pmatrix} \gamma & \trans\vecm \\ \vecnull & 1 \end{pmatrix} : \gamma\in\Gamma_0,\; \vecm\in\ZZ^{d-1} \bigg\} .
\end{equation}
Note that $H$ and $\Gamma_H$ are isomorphic to $\ASL(d-1,\RR)$ and $\ASL(d-1,\ZZ)$, respectively. We normalize the Haar measure $\mu_H$ of $H$ so that it becomes a probability measure on $\GamH$; explicitly:
\begin{equation} \label{siegel2}
d\mu_H(M) = d\mu_0(A)\, d\vecb, \qquad M=\begin{pmatrix} A & \trans\vecb \\ \vecnull & 1 \end{pmatrix}.
\end{equation}

The following states the classical equidistribution theorem for $\Phi^{t}$-translates of the closed horospheres $\Gamma\backslash\Gamma\{ n_-(\vecx) : \vecx\in\TT^{d-1} \}$ on $\GamG$; cf.~\cite[Section 5]{partI}.

\begin{thm}\label{equiThm0}
Let $\lambda$ be a Borel probability measure on $\TT^{d-1}$, absolutely continuous with respect to Lebesgue measure, and let $f:\TT^{d-1}\times\GamG\to\RR$ be bounded continuous. Then
\begin{equation}
	\lim_{t\to\infty} \int_{\TT^{d-1}} f\big(\vecx,n_-(\vecx)\Phi^{t}\big)\, d\lambda(\vecx)  = \int_{\TT^{d-1}\times\GamG} f(\vecx,M) \, d\lambda(\vecx) \, d\mu(M) .
\end{equation}
\end{thm}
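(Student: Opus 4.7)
The plan is to follow the classical Margulis thickening strategy, which converts mixing of $\Phi^t$ on $(\GamG,\mu)$ into equidistribution of $\Phi^t$-translates of the closed horosphere $\{\Gamma n_-(\vecx):\vecx\in\TT^{d-1}\}$. Since the test function $f$ depends jointly on $\vecx$ and $M$, I would first reduce to the product form $f(\vecx,M)=g(\vecx)h(M)$ with $g\in C(\TT^{d-1})$ and $h\in C_c(\GamG)$, using Stone--Weierstrass on compact subsets together with the uniform non-divergence estimate described at the end. Writing $d\lambda(\vecx)=\rho(\vecx)\,d\vecx$ and absorbing $g\rho$ into $\tilde\rho$, the task becomes to show
\begin{equation*}
J(t):=\int_{\TT^{d-1}} h\bigl(n_-(\vecx)\Phi^t\bigr)\tilde\rho(\vecx)\,d\vecx \;\longrightarrow\; \Bigl(\int_{\TT^{d-1}}\tilde\rho\,d\vecx\Bigr)\int_{\GamG} h\,d\mu.
\end{equation*}

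For the core horosphere equidistribution, choose a smooth nonnegative bump $\psi_\epsilon$ on the directions transversal to the horosphere inside $G$, namely on $G_0\times U^+$, where $G_0$ embeds in $G$ as the centralizer of $\Phi^t$ and $U^+=\{n_+(\vecy):\vecy\in\RR^{d-1}\}$ is the stable horospheric subgroup; take $\psi_\epsilon$ supported in an $\epsilon$-neighborhood of the identity with $\int\psi_\epsilon=1$. Consider the thickened average
\begin{equation*}
J_\epsilon(t):=\int_{\TT^{d-1}}\int_{G_0\times U^+} h\bigl(n_-(\vecx)\,\alpha\,\beta\,\Phi^t\bigr)\,\psi_\epsilon(\alpha,\beta)\,d\alpha\,d\beta\,\tilde\rho(\vecx)\,d\vecx.
\end{equation*}
Two estimates are needed. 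First, using $\alpha\Phi^t=\Phi^t\alpha$ and the contraction $\Phi^{-t}\beta\Phi^t=n_+(\e^{-dt}\vecy)$ for $\beta=n_+(\vecy)$, the argument of $h$ rewrites as $n_-(\vecx)\Phi^t\cdot\alpha\cdot n_+(\e^{-dt}\vecy)$, whose right factor lies within $O(\epsilon)$ of the identity for all $t$ large; uniform continuity of the compactly supported $h$ then gives $\limsup_{t\to\infty}|J_\epsilon(t)-J(t)|\to 0$ as $\epsilon\to 0$. Second, the local product decomposition $G=H\cdot A\cdot U^+$ (with $H$ as in \eqref{Hdef} and $A=\{\Phi^s\}$) together with the Haar factorization \eqref{siegel}, and unfolding in $\Gamma$, rewrites $J_\epsilon(t)$ as a matrix coefficient $\langle\Psi_\epsilon,\,h\circ\Phi^t\rangle_{L^2(\GamG,\mu)}$ for a fixed function $\Psi_\epsilon\in C_c(\GamG)$ built from $\tilde\rho$ and $\psi_\epsilon$ and satisfying $\int\Psi_\epsilon\,d\mu=\int\tilde\rho\,d\vecx$. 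Mixing of $\Phi^t$ on $L_0^2(\GamG,\mu)$, which follows from Howe--Moore for $\SL(d,\RR)$, yields $J_\epsilon(t)\to(\int\tilde\rho)(\int h\,d\mu)$ for each fixed $\epsilon$. Combining the two estimates and letting first $t\to\infty$ and then $\epsilon\to 0$ proves the claim for $h\in C_c(\GamG)$.

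The main technical obstacle is the uniform non-escape of mass, which is needed both to justify the Stone--Weierstrass reduction for general bounded continuous $f$ and to pass from $h\in C_c$ back to $h\in C_b$. Concretely, one must show that for every $\varepsilon>0$ there exists a compact set $K\subset\GamG$ such that $\lambda(\{\vecx\in\TT^{d-1}:n_-(\vecx)\Phi^t\notin K\})<\varepsilon$ uniformly for all $t\geq 0$. Since $n_-(\vecx)\Phi^t$ parametrizes a concrete family of unimodular lattices in $\RR^d$, this reduces to a quantitative bound on the set of $\vecx$ for which the lattice has a very short vector; such bounds follow from the Minkowski--Dani--Kleinbock--Margulis quantitative non-divergence for unipotent trajectories on $\GamG$. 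A truncation argument then reduces the $C_b$ case to the $C_c$ case handled above, completing the proof.
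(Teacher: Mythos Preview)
The paper does not prove this theorem itself; it is stated with a reference to \cite[Section 5]{partI}. Your Margulis-thickening-plus-mixing outline is the standard route and is essentially what underlies that reference, so in spirit your approach matches.

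There is, however, a concrete step that fails as written. You identify the directions transversal to the unstable horosphere $U^-=\{n_-(\vecx)\}$ as $G_0\times U^+$, asserting that $G_0$ is the centralizer of $\Phi^t$. In fact the centralizer of $\Phi^t$ in $G$ is the full block-diagonal subgroup $\{\diag(B,c):\det B\cdot c=1\}$, which is $G_0\cdot A$ with $A=\{\Phi^s\}$; correspondingly the transversal to $U^-$ has dimension $d^2-d$, not $d^2-d-1$. With $\psi_\epsilon$ supported only on $G_0\times U^+$, the thickened set $\{n_-(\vecx)\alpha\beta\}$ has codimension one in $G$ and hence Haar measure zero, so the ``$\Psi_\epsilon\in C_c(\GamG)$'' you need in order to invoke mixing does not exist as an $L^2$ function. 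The repair is immediate: include a short window in the $A$-direction in the bump. Since $\Phi^s$ commutes with $\Phi^t$, this merely replaces $J(t)$ by an average of $J(t+s)$ over $|s|<\epsilon$, which is harmless by continuity in $t$. With that adjustment your outline is sound, and the quantitative non-divergence input you cite is exactly what is needed to pass from compactly supported to bounded continuous test functions.
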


A standard probabilistic argument \cite[Chapter III]{Shiryaev} allows to reformulate the above statement in terms characteristic functions of subsets of $\TT^{d-1}\times\GamG$. 

\begin{thm}\label{equiThm0b}
Take $\lambda$ as in Theorem \ref{equiThm0}, and let $\scrA\subset\TT^{d-1}\times\GamG$. Then
\begin{equation}
	\liminf_{t\to\infty} \lambda\big(\big\{ \vecx\in\TT^{d-1}: \big(\vecx,n_-(\vecx)\Phi^{t}\big)\in\scrA \big\}\big)  \geq 
	(\lambda\times\mu)(\scrA^\circ) 
\end{equation}
and
\begin{equation}
	\limsup_{t\to\infty} \lambda\big(\big\{ \vecx\in\TT^{d-1}: \big(\vecx,n_-(\vecx)\Phi^{t}\big)\in\scrA \big\}\big)  \leq 
	(\lambda\times\mu)(\overline\scrA) .
\end{equation}
\end{thm}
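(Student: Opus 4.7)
Theorem \ref{equiThm0b} is exactly the standard portmanteau-type reformulation of Theorem \ref{equiThm0}. The plan is to define, for each $t$, the Borel probability measure $\nu_t$ on $\TT^{d-1}\times\GamG$ by
\begin{equation*}
	\int f\,d\nu_t = \int_{\TT^{d-1}} f\big(\vecx,n_-(\vecx)\Phi^t\big)\,d\lambda(\vecx),
\end{equation*}
and observe that Theorem \ref{equiThm0} asserts $\nu_t\to\lambda\times\mu$ in the sense of weak convergence against bounded continuous functions. Since $\nu_t(\scrA^\circ)\leq \nu_t(\scrA)\leq \nu_t(\overline\scrA)$, the two inequalities in Theorem \ref{equiThm0b} reduce to the claims $\liminf_{t\to\infty}\nu_t(\scrA^\circ)\geq (\lambda\times\mu)(\scrA^\circ)$ and $\limsup_{t\to\infty}\nu_t(\overline\scrA)\leq (\lambda\times\mu)(\overline\scrA)$, i.e.\ the open-set and closed-set halves of the portmanteau theorem.

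\textbf{Sandwiching by bounded continuous functions.} For the liminf bound, I would fix $\epsilon>0$ and use inner regularity of the Radon measure $\lambda\times\mu$ on the locally compact Polish space $\TT^{d-1}\times\GamG$ to pick a compact set $K\subset\scrA^\circ$ with $(\lambda\times\mu)(K)\geq(\lambda\times\mu)(\scrA^\circ)-\epsilon$. Urysohn's lemma supplies $f\in C_b(\TT^{d-1}\times\GamG)$ with $\chi_K\leq f\leq\chi_{\scrA^\circ}$, whence Theorem \ref{equiThm0} gives
\begin{equation*}
	\liminf_{t\to\infty} \nu_t(\scrA^\circ) \;\geq\; \lim_{t\to\infty} \int f\,d\nu_t \;=\; \int f\,d(\lambda\times\mu) \;\geq\; (\lambda\times\mu)(\scrA^\circ)-\epsilon;
\end{equation*}
letting $\epsilon\to 0$ completes this half. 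For the limsup bound I would exploit metrizability of $\TT^{d-1}\times\GamG$: the functions $f_\delta(y):=\max\bigl(0,1-\dist(y,\overline\scrA)/\delta\bigr)$ lie in $C_b$, satisfy $\chi_{\overline\scrA}\leq f_\delta$, and decrease to $\chi_{\overline\scrA}$ as $\delta\to 0$. Applying Theorem \ref{equiThm0} to each $f_\delta$, then invoking monotone (or dominated) convergence on the right-hand side, yields
\begin{equation*}
	\limsup_{t\to\infty} \nu_t(\overline\scrA) \;\leq\; \lim_{t\to\infty}\int f_\delta\,d\nu_t \;=\; \int f_\delta\,d(\lambda\times\mu) \;\xrightarrow[\delta\to 0]{}\; (\lambda\times\mu)(\overline\scrA).
\end{equation*}

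\textbf{Expected obstacle.} There is essentially no obstacle: the argument is entirely standard and is precisely what Shiryaev, Chapter III, records as the portmanteau theorem. The only point requiring a moment of attention is that $\GamG$ is non-compact, so Urysohn's lemma and the $\delta$-neighborhood approximation must be deployed inside the bounded-continuous (not compactly supported continuous) class, matching the hypothesis on the test functions in Theorem \ref{equiThm0}. Both constructions above produce $C_b$ functions, so Theorem \ref{equiThm0} applies directly and the argument goes through without further regularity assumptions on $\scrA$.
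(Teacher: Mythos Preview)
Your proposal is correct and matches the paper's approach exactly: the paper does not give a detailed argument but simply cites the portmanteau theorem from \cite[Chapter III]{Shiryaev}, and what you have written is precisely that standard proof spelled out. Your remark about needing $C_b$ rather than $C_c$ test functions because $\GamG$ is non-compact is the only subtlety, and you have handled it correctly.
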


\begin{remark}\label{remA}
This shows that Theorem \ref{equiThm0} can be extended to test functions $f$ that are characteristic functions of subsets of $\TT^{d-1}\times\GamG$ with boundary of $(\lambda\times\mu)$-measure zero \cite[Sect.~5.3]{partI}, and thus also to functions that are the product of such a characteristic function and a bounded continuous function.
\end{remark}

We will now replace the absolutely continuous measure $\lambda$ by equally weighted point masses at the elements of the Farey sequence
\begin{equation}\label{fareydef}
	\scrF_Q=\bigg\{ \frac{\vecp}{q} \in[0,1)^{d-1} : (\vecp,q)\in\hatZZ^d, \; 0<q\leq Q \bigg\} ,
\end{equation}
for $Q\in\NN$.
Note that
\begin{equation}\label{asymQ}
	|\scrF_Q| \sim \frac{Q^d}{d\,\zeta(d)} \qquad (Q\to\infty) .
\end{equation}
It will be notationally convenient to also allow general $Q\in\RR_{\geq 1}$ in the definition \eqref{fareydef} of $\scrF_Q$; note that $\scrF_Q=\scrF_{[Q]}$ where $[Q]$ is the integer part of $Q$.

\begin{thm}\label{equiThm1}
Fix $\sigma\in\RR$. Let $f:\TT^{d-1}\times\GamG\to\RR$ be bounded continuous. Then, for $Q=\e^{(d-1)(t-\sigma)}$,
\begin{multline}\label{equiThm1eq}
	\lim_{t\to\infty} \frac{1}{|\scrF_Q|} \sum_{\vecr\in\scrF_Q} f\big(\vecr,n_-(\vecr)\Phi^{t}\big)  \\
	= d(d-1)\e^{d(d-1)\sigma}\int_{\sigma}^\infty \int_{\TT^{d-1}\times\GamH} \widetilde f(\vecx, M \Phi^{-s}) \, d\vecx \, d\mu_H(M) \, \e^{-d(d-1)s} ds 
\end{multline}
with $\widetilde f(\vecx,M):=f(\vecx,\trans M^{-1})$.
\end{thm}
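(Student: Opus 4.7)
The plan is to parametrize the Farey sequence through cosets of $\Gamma$ and reduce to Theorem~\ref{equiThm0b}. Let $H^- := \trans H^{-1}$ be the subgroup of $G$ stabilizing $\trans(\vecnull,1)$ under left multiplication, and $\Gamma_{H^-} := \trans\Gamma_H^{-1} = \Gamma \cap H^-$. For each primitive $(\vecp,q)\in\hatZZ^d$ with $q\geq 1$, choose $\gamma\in\Gamma$ with last column $\trans(\vecp,q)$, and set $s := t - (\log q)/(d-1)$, so that $q = \e^{(d-1)(t-s)}$ and $s\in[\sigma,t]$ whenever $q\in[1,Q]$. The last-column identity
\[
	n_-(\vecp/q)\,\Phi^{t-s}\,\trans(\vecnull,1)
	= q\,\trans(\vecp/q,1)
	= \trans(\vecp,q)
	= \gamma\,\trans(\vecnull,1)
\]
shows that $\tilde h := \gamma^{-1} n_-(\vecp/q)\Phi^{t-s}$ stabilizes $\trans(\vecnull,1)$, hence lies in $H^-$ (uniquely determined modulo $\Gamma_{H^-}$). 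This yields the decomposition
\[
	n_-(\vecp/q)\,\Phi^t = \gamma\,\tilde h\,\Phi^s,
\]
so that by left $\Gamma$-invariance of $f$ the Farey summand becomes $f(\vecp/q,[\tilde h\Phi^s])$. The LHS of \eqref{equiThm1eq} thereby splits as a double sum over integer heights $q \in \{1, \ldots, [Q]\}$ and primitive residues $\vecp \bmod q$.

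The crucial analytic step is to establish the joint equidistribution of the discrete pairs $(\vecp/q, \tilde h(\vecp, q))$ on $\TT^{d-1} \times \Gamma_{H^-}\backslash H^-$ as $q\to\infty$. This is where Theorem~\ref{equiThm0b} is invoked, in its ``transposed'' form via the anti-automorphism $X\mapsto\trans X^{-1}$ of $G$ (which exchanges $H \leftrightarrow H^-$, $n_-(\vecx) \leftrightarrow n_+(-\vecx)$, and $\Phi^s \leftrightarrow \Phi^{-s}$): for fixed $q$, the inner sum over primitive residues $\vecp \bmod q$ is a discrete orbit sum on the closed horosphere $\Gamma\backslash\Gamma H^-\Phi^s$, whose equidistribution as $s\to\infty$ (equivalently $q\to\infty$) follows from the transposed Theorem~\ref{equiThm0b} applied to absolutely continuous approximants of the Farey point measure on $\TT^{d-1}$.

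The outer sum over $q$ is then a Riemann sum for the $s$-integral on the RHS of \eqref{equiThm1eq}. The substitution $q = \e^{(d-1)(t-s)}$ converts $\sum_{q=1}^{[Q]} q^{d-1}(\cdots)$ into $(d-1)\int_\sigma^t \e^{d(d-1)(t-s)}(\cdots)\,ds$; combining with the asymptotics $|\scrF_Q|^{-1} \sim d\zeta(d)/Q^d$ and $J_{d-1}(q) \sim q^{d-1}/\zeta(d)$ for the count of primitive residues modulo $q$ yields the prefactor $d(d-1)\e^{d(d-1)\sigma}$ together with the weight $\e^{-d(d-1)s}$. Via the substitution $\tilde h \leftrightarrow M^{-T}$ (using $\tilde h\Phi^s = M^{-T}\Phi^s = (M\Phi^{-s})^{-T}$ together with unimodularity of $H$), the integration over $\tilde h \in \Gamma_{H^-}\backslash H^-$ is identified with that over $M \in \GamH$, and the integrand $f(\vecx,[\tilde h\Phi^s])$ with $\widetilde f(\vecx,M\Phi^{-s})$, matching the RHS of \eqref{equiThm1eq}.

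The main obstacle is the joint equidistribution in the second paragraph above: it is heuristically equivalent to Schmidt's equidistribution of primitive sublattices of $\ZZ^d$, and its derivation from Theorem~\ref{equiThm0b} requires identifying $\tilde h(\vecp,q)$ as the $H^-$-component of the horospherical orbit $[n_-(\vecx)\Phi^t]$ restricted to rational $\vecx=\vecp/q$, with a uniform error estimate in $s\in[\sigma,t]$ sharp enough for the $t\to\infty$ limit to commute with the Riemann-sum convergence and the $s$-integration.
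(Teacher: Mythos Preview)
Your outline contains a genuine gap, and you have correctly located it in your final paragraph: the ``joint equidistribution'' of the pairs $(\vecp/q,\tilde h(\vecp,q))$ on $\TT^{d-1}\times\Gamma_{H^-}\backslash H^-$ is essentially the content of Theorem~\ref{equiThm1} itself, and you have not shown how to extract it from Theorem~\ref{equiThm0b}. The difficulty is structural: Theorem~\ref{equiThm0b} takes an \emph{absolutely continuous} measure $\lambda$ as input, while the Farey measure at level $q$ is purely atomic. Your phrase ``absolutely continuous approximants of the Farey point measure'' hides the whole problem---controlling the difference between the atomic measure and any such approximant is precisely what needs to be proved. Moreover, as the Appendix makes explicit, Schmidt's equidistribution of primitive sublattices is \emph{derived from} Theorem~\ref{equiThm1} in this paper, so invoking it here would be circular. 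Finally, even granting the joint equidistribution, you would still need it with enough uniformity in $q$ to justify interchanging the $t\to\infty$ limit with the Riemann sum over $q$; nothing in your sketch supplies this.

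The paper's argument bypasses the problem by a single device you are missing. Rather than stratify by $q$, it thickens \emph{all} Farey points at once by balls of radius $\epsilon\e^{-dt}$ and observes (this is the key identity) that the resulting set $\scrF_Q^\epsilon$ is exactly $\{\vecx:n_-(\vecx)\Phi^t\in\widetilde\scrH_\epsilon\}$ for an explicit $\Gamma$-invariant set $\scrH_\epsilon\subset G$ built from a cone $\fC_\epsilon$. Thus the indicator of the thickened Farey set is itself of the form $\widetilde\chi_\epsilon(n_-(\vecx)\Phi^t)$ for a (piecewise continuous, boundary-measure-zero) function on $\GamG$, and Theorem~\ref{equiThm0} applies directly to the product $f\cdot\widetilde\chi_\epsilon$. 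A separate disjointness argument (Mahler's criterion on a compact piece of $\GamG$) shows that for $\epsilon$ small the thickened balls do not overlap, so the thickened integral unfolds cleanly into the Farey sum; uniform continuity of $f$ and the stable-horosphere contraction then control the error uniformly, and a final volume computation in the $H\times\fC_\epsilon$ parametrization produces the $s$-integral on the right-hand side. No equidistribution at individual levels $q$, and no interchange of limits, is required.
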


\begin{remark}\label{remB}
The identical argument as in Remark \ref{remA} permits the extension of Theorem \ref{equiThm1} to any test function $f$ which is the product of a bounded continuous function and the characteristic function of a subset $\scrA\subset \TT^{d-1}\times\GamG$, where $\widetilde\scrA=\{(\vecx,M): (\vecx,\trans M^{-1})\in\scrA\}$ has boundary of measure zero with respect to $d\vecx \, d\mu_H \, ds$.
\end{remark}

\begin{proof}[Proof of Theorem \ref{equiThm1}]
{\bf Step 0: Uniform continuity.} By choosing the test function $f(\vecx,M)=f_0(\vecx,M\Phi^{-\sigma})$ with $f_0:\TT^{d-1}\times\GamG\to\RR$ bounded continuous, it is evident that we only need to consider the case $\sigma=0$. We may also assume without loss of generality that $f$, and thus $\widetilde f$, have compact support. That is, there is $\scrC\subset G$ compact such that $\supp f,\supp\widetilde f\subset\TT^{d-1}\times \Gamma\backslash\Gamma\scrC$. The generalization to bounded continuous functions follows from a standard approximation argument.

Since $f$ is continuous and has compact support, it is uniformly continuous. That is, given any $\delta>0$ there exists $\epsilon>0$ such that for all $(\vecx,M),(\vecx',M')\in \RR^{d-1}\times G$,
\begin{equation}\label{epsi}
	\|\vecx-\vecx'\| < \epsilon, \qquad d(M,M') < \epsilon 
\end{equation}
implies
\begin{equation}\label{delt}
	\big| f(\vecx,M) - f(\vecx',M') \big| < \delta .
\end{equation}

The plan is now to first establish \eqref{equiThm1eq} for the set
\begin{equation}
	\scrF_{Q,\theta}=\bigg\{ \frac{\vecp}{q} \in[0,1)^{d-1} : (\vecp,q)\in\hatZZ^d, \; \theta Q<q\leq Q \bigg\} ,
\end{equation}
for any $\theta\in(0,1)$. The constant $\theta$ will remain fixed until the very last step of this proof.

{\bf Step 1: Thicken the Farey sequence.} The plan is to reduce the statement to Theorem \ref{equiThm0}. To this end, we thicken the set $\scrF_{Q,\theta}$ as follows: For $\epsilon>0$ (we will in fact later use the $\epsilon$ from Step 0), let
\begin{equation}\label{FQeps0}
	\scrF_Q^\epsilon =  \bigcup_{\vecr\in\scrF_{Q,\theta}+\ZZ^{d-1}} \big\{ \vecx\in \RR^{d-1} : \|\vecx-\vecr\| < \epsilon\e^{-dt} \big\}  .
\end{equation}
Note that $\scrF_Q^\epsilon$ is symmetric with respect to $\vecx\mapsto-\vecx$.
A short calculation yields
\begin{equation}\label{FQeps}
	\scrF_Q^\epsilon = \bigcup_{\veca\in\hatZZ^d} \big\{ \vecx\in \RR^{d-1} :  \veca\,n_+(\vecx)\Phi^{-t}\in \fC_\epsilon \big\} ,
\end{equation}
where
\begin{equation}\label{Ceps}
	\fC_\epsilon= \big\{ (y_1,\ldots,y_d) \in\RR^d : \| (y_1,\ldots,y_{d-1}) \| < \epsilon y_d , \; \theta <y_d\leq 1 \big\} .
\end{equation}
Let
\begin{equation}\label{Ha}
	\scrH_\epsilon=\bigcup_{\veca\in\hatZZ^d} \scrH_\epsilon(\veca),\qquad 
	\scrH_\epsilon(\veca) = \big\{ M\in G:\veca M \in \fC_\epsilon \big\} .
\end{equation}
The bijection (cf.~\cite{siegel})
\begin{equation}\label{bij}
	\Gamma_H\backslash\Gamma \to \hatZZ^d, \qquad \Gamma_H \gamma \mapsto (\vecnull,1) \gamma 
\end{equation}
allows us to rewrite
\begin{equation}\label{HaHa}
	\scrH_\epsilon=\bigcup_{\gamma\in\Gamma_H\backslash\Gamma} \scrH_\epsilon((\vecnull,1)\gamma)=\bigcup_{\gamma\in\Gamma/\Gamma_H} \gamma\scrH_\epsilon^1 ,
	\qquad\text{with } \scrH_\epsilon^1= \scrH_\epsilon((\vecnull,1)).
\end{equation}
Now
\begin{equation}\label{MyMy}
\begin{split}
	\scrH_\epsilon^1 = & \big\{ M\in G: (\vecnull,1) M \in \fC_\epsilon \big\} \\
	= & H \big\{ M_\vecy : \vecy \in \fC_\epsilon \big\}
\end{split}
\end{equation}
with $H$ as in \eqref{Hdef}, and $M_\vecy\in G$ such that $(\vecnull,1) M_\vecy=\vecy$. Since $\vecy\in\fC_\epsilon$ implies $y_d>0$, we may choose
\begin{equation}\label{My}
	M_\vecy=\begin{pmatrix} y_d^{-1/(d-1)} 1_{d-1} & \trans\vecnull \\ \vecy' & y_d\end{pmatrix}, \qquad \vecy'=(y_1,\ldots,y_{d-1}).
\end{equation}

{\bf Step 2: Prove disjointness.} We will now prove the following claim: {\em Given a compact subset $\scrC\subset G$, there exists $\epsilon_0>0$ such that 
\begin{equation}\label{C1}
	\gamma\scrH_\epsilon^1\cap \scrH_\epsilon^1\cap\Gamma\scrC=\emptyset
\end{equation}
for every $\epsilon\in(0,\epsilon_0]$, $\gamma\in \Gamma\setminus\Gamma_H$.} 

To prove this claim, note that \eqref{C1} is equivalent to 
\begin{equation}\label{C2}
	\scrH_\epsilon((\vecp,q))\cap\scrH_\epsilon^1\cap\Gamma\scrC=\emptyset
\end{equation}
for every $(\vecp,q)\in\hatZZ^d$, $(\vecp,q)\neq(\vecnull,1)$. For 
\begin{equation}\label{MAb}
	M=\begin{pmatrix} A & \trans\vecb \\ \vecnull & 1 \end{pmatrix} M_\vecy, \qquad
	M_\vecy=\begin{pmatrix} y_d^{-1/(d-1)} 1_{d-1} & \trans\vecnull \\ \vecy' & y_d\end{pmatrix},
\end{equation}
we have
\begin{equation}
	(\vecp,q) M
	=(\vecp A y_d^{-1/(d-1)}+(\vecp\trans\vecb+q)\vecy', (\vecp\trans\vecb+q)y_d),
\end{equation}
and thus $M\in\scrH_\epsilon((\vecp,q))\cap\scrH_\epsilon^1$ if and only if
\begin{equation}\label{A1}
	\|\vecp A y_d^{-1/(d-1)}+(\vecp\trans\vecb+q)\vecy'\|<\epsilon (\vecp\trans\vecb+q)y_d,
\end{equation}
\begin{equation}\label{A12}
	\theta <(\vecp\trans\vecb+q)y_d \leq 1,
\end{equation}
and 
\begin{equation}\label{A2}
	\| \vecy' \| < \epsilon y_d , \qquad \theta <y_d\leq 1 .
\end{equation}
Relations \eqref{A12} and \eqref{A2} imply $\|(\vecp\trans\vecb+q)\vecy'\|<\epsilon(\vecp\trans\vecb+q)y_d\leq \epsilon$ and so, by \eqref{A1}, $\|\vecp A y_d^{-1/(d-1)}\|<2\epsilon(\vecp\trans\vecb+q)y_d\leq 2\epsilon$. That is, $\|\vecp A \|<2\epsilon y_d^{1/(d-1)}$ and hence 
\begin{equation}\label{A2e}
\|\vecp A \|< 2\epsilon.	
\end{equation}
Let us now suppose $M\in\Gamma\scrC$ with $\scrC$ compact. The set
\begin{equation}
	\scrC'=\scrC \big\{ M_\vecy^{-1} : \vecy \in \overline \fC_\epsilon \big\} 
\end{equation}
is still compact, by the compactness of $\overline \fC_\epsilon$ (the closure of $\fC_\epsilon$) in $\RR^d\setminus\{\vecnull\}$. In view of \eqref{MAb} we obtain 
\begin{equation}
	\begin{pmatrix} A & \trans\vecb \\ \vecnull & 1 \end{pmatrix} \in \Gamma\scrC',
\end{equation}
and so $A\in \Gamma_0 \scrC_0$ for some compact $\scrC_0\subset G_0$.

Mahler's compactness criterion then shows that
\begin{equation}
	I:=\inf_{A\in\Gamma_0\scrC_0}\inf_{\vecp\in\ZZ^{d-1}\setminus\{\vecnull\}}\|\vecp A \|>0 .
\end{equation}
Now choose $\epsilon_0$ such that $0<2\epsilon_0<I$. Then \eqref{A2e} implies $\vecp=\vecnull$ and therefore $q=1$. The claim is proved.

{\bf Step 3: Apply Theorem \ref{equiThm0}.} Step 2 implies that, for $\scrC\subset G$ compact, there exists $\epsilon_0>0$ such that for every $\epsilon\in(0,\epsilon_0]$
\begin{equation}\label{HaHaHa}
	\scrH_\epsilon\cap \Gamma\scrC =\bigcup_{\gamma\in\Gamma/\Gamma_H} \big( \gamma\scrH_\epsilon^1 \cap \Gamma\scrC \big) 
\end{equation}
is a disjoint union. Hence, if $\chi_\epsilon$ and $\chi_\epsilon^1$ are the characteristic functions of the sets $\scrH_\epsilon$ and $\scrH_\epsilon^1$, respectively, we have
\begin{equation}\label{chi}
	\chi_\epsilon(M) = \sum_{\gamma\in\Gamma_H\backslash\Gamma} \chi_{\epsilon}^1(\gamma M) ,
\end{equation}
for all $M\in\Gamma\scrC$.
Evidently $\scrH_\epsilon^1$ and thus $\scrH_\epsilon$ have boundary of $\mu$-measure zero.
We furthermore set $\widetilde\chi_\epsilon(M):=\chi_\epsilon(\trans M^{-1})$, and note that  
$\chi_\epsilon\big(n_+(\vecx)\Phi^{-t}\big)=\chi_\epsilon\big(n_+(-\vecx)\Phi^{-t}\big)$ is the characteristic function of the set $\scrF_Q^\epsilon$; recall \eqref{FQeps} and the remark after \eqref{FQeps0}.
Therefore
\begin{equation}\label{commEq2}
\begin{split}
	\int_{\scrF_Q^\epsilon/\ZZ^{d-1}} f\big(\vecx,n_-(\vecx)\Phi^{t}\big) d\vecx & = \int_{\TT^{d-1}} f\big(\vecx,n_-(\vecx)\Phi^{t}\big) 
	\chi_\epsilon\big(n_+(-\vecx)\Phi^{-t}\big) d\vecx\\
	& = \int_{\TT^{d-1}} f\big(\vecx,n_-(\vecx)\Phi^{t}\big) 
	\widetilde\chi_\epsilon\big(n_-(\vecx)\Phi^{t}\big) d\vecx ,
\end{split}
\end{equation}
and Theorem \ref{equiThm0} yields
\begin{equation}\label{last}
\begin{split}
	\lim_{t\to\infty} \int_{\TT^{d-1}}  f\big(\vecx,n_-(\vecx)\Phi^{t}\big)\widetilde\chi_\epsilon\big(n_-(\vecx)\Phi^{t}\big)\, d\vecx  
	& =  \int_{\TT^{d-1}\times \GamG} f(\vecx,M) \widetilde\chi_\epsilon(M)\, d\vecx \, d\mu(M) \\
	& = \int_{\TT^{d-1}\times \GamG} \widetilde f(\vecx,M) \chi_\epsilon(M)\, d\vecx \, d\mu(M) .
\end{split}
\end{equation}

{\bf Step 4: A volume computation.}
To evaluate the right hand side of \eqref{last}, we use \eqref{chi}:
\begin{equation}\label{RhS}
\begin{split}
	 \int_{\TT^{d-1}\times \GamG} \widetilde f(\vecx,M) \chi_\epsilon(M)\, d\vecx \, d\mu(M)
	 & = \int_{\TT^{d-1}\times \Gamma_H\backslash G} \widetilde f(\vecx,M) \chi_\epsilon^1(M)\, d\vecx \, d\mu(M)\\
	 & = \int_{\TT^{d-1}\times \Gamma_H\backslash\scrH_\epsilon^1} \widetilde f(\vecx,M)\, d\vecx \, d\mu(M) .
\end{split}
\end{equation}
Given $\vecy\in\RR^d$ we pick a matrix $M_\vecy\in G$ such that $(\vecnull,1) M_\vecy=\vecy$; recall \eqref{My} for an explicit choice of $M_\vecy$ for $y_d>0$. The map
\begin{equation}
	H \times \RR^d\setminus\{\vecnull\} \to  G, \qquad 
	(M, \vecy) \mapsto MM_\vecy, 
\end{equation}
provides a parametrization of $G$, where in view of \eqref{siegel}
\begin{equation}
	d\mu = \zeta(d)^{-1} d\mu_H \, d\vecy .
\end{equation}
Hence \eqref{RhS} equals
\begin{equation}\label{RhS1}
	\frac{1}{\zeta(d)} \int_{\TT^{d-1}\times \Gamma_H\backslash H \times\fC_\epsilon} \widetilde f\big(\vecx,M M_\vecy \big) \, d\vecx \, d\mu_H(M) d\vecy .
\end{equation}

For
\begin{equation}\label{Ddef}
	D(y_d)=\begin{pmatrix} y_d^{-1/(d-1)} 1_{d-1} & \trans\vecnull \\ \vecnull & y_d \end{pmatrix},
\end{equation}
we have
\begin{equation}
	d\big(M_\vecy, D(y_d)\big)= d\big(D(y_d) n_+(y_d^{-1}\vecy'), D(y_d)\big)
	= d\big(n_+(y_d^{-1}\vecy'), 1_d \big)
	\leq y_d^{-1} \| \vecy' \| . 
\end{equation}
We recall that $y_d^{-1} \| \vecy' \|<\epsilon$ for $\vecy\in\fC_\epsilon$.
Therefore, with the choice of $\delta,\epsilon$ made in Steps 0 and 2, we have (note that \eqref{delt} applies also to $\widetilde f$)
\begin{equation}\label{f0}
	\bigg| \eqref{RhS1} -
	\frac{1}{\zeta(d)} \int_{\TT^{d-1}\times \Gamma_H\backslash H\times \fC_\epsilon} \widetilde f\big(\vecx,M D(y_d)\big) \, d\vecx \, d\mu_H(M) d\vecy \bigg|  <   \frac{\delta}{\zeta(d)}\int_{\fC_\epsilon} d\vecy .
\end{equation}

We have
\begin{equation}
\begin{split}
	\int_{\fC_\epsilon} \widetilde f\big(\vecx,M D(y_d) \big) \, d\vecy
	& = \vol(\scrB_1^{d-1})\,\epsilon^{d-1} \int_\theta^1 \widetilde f\big(\vecx,M D(y_d)\big) \, y_d^{d-1}\,dy_d \\
	& = (d-1) \vol(\scrB_1^{d-1})\,\epsilon^{d-1} \int_{0}^{|\log\theta|/(d-1)} \widetilde f\big(\vecx,M\Phi^{-s} \big) \,\e^{-d(d-1)s} ds ,
	\end{split} 
\end{equation}
and
\begin{equation}
	\int_{\fC_\epsilon} d\vecy = \frac{1}{d}\,\vol(\scrB_1^{d-1})\,\epsilon^{d-1} (1-\theta^d),
\end{equation}
where $\scrB_1^{d-1}$ denotes the unit ball in $\RR^{d-1}$.
So \eqref{f0} becomes
\begin{multline}\label{f1}
	\bigg| \eqref{RhS1} -
	\frac{(d-1) \vol(\scrB_1^{d-1})\,\epsilon^{d-1}}{\zeta(d)} \int_{0}^{|\log\theta|/(d-1)} \int_{\TT^{d-1}\times \Gamma_H\backslash H}  \widetilde f\big(\vecx,M\Phi^{-s} \big)   \, d\vecx \, d\mu_H(M)\, \e^{-d(d-1)s} ds \bigg| \\ <  \frac{\vol(\scrB_1^{d-1})\,\delta \, \epsilon^{d-1}}{d\,\zeta(d)} \,(1-\theta^d)  .
\end{multline}

{\bf Step 5: Distance estimates.} Since \eqref{HaHaHa} is a disjoint union, we have furthermore (this is in effect another way of writing \eqref{commEq2} using \eqref{chi})
\begin{equation}
\int_{\scrF_Q^\epsilon/\ZZ^{d-1}} f\big(\vecx,n_-(\vecx)\Phi^{t}\big) d\vecx
=	\sum_{\vecr\in\scrF_{Q,\theta}} \int_{\|\vecx-\vecr\|<\epsilon\e^{-dt}} f\big(\vecx,n_-(\vecx)\Phi^{t}\big) d\vecx .
\end{equation}
Eq.~\eqref{expon} implies that 
\begin{equation}
	d\big(n_-(\vecx)\Phi^t,n_-(\vecr) \Phi^t\big)
	\leq \e^{dt} \| \vecx -\vecr \| <\epsilon .
\end{equation}
Because $f$ is uniformly continuous we therefore have, for the same $\delta,\epsilon$ as above:
\begin{equation}\label{f2}
\bigg| \int_{\|\vecx-\vecr\|<\epsilon\e^{-dt}} f\big(\vecx,n_-(\vecx)\Phi^{t}\big) d\vecx - \frac{\vol(\scrB_1^{d-1}) \epsilon^{d-1}}{\e^{d(d-1)t}} f\big(\vecr,n_-(\vecr)\Phi^{t}\big)  \bigg| < \frac{\vol(\scrB_1^{d-1}) \,\delta\, \epsilon^{d-1}}{\e^{d(d-1)t}},
\end{equation}
uniformly for all $t\geq 0$.

{\bf Step 6: Conclusion.}
The approximations \eqref{f1} and \eqref{f2} hold uniformly for any $\delta>0$. Passing to the limit $\delta\to 0$, we obtain
\begin{multline}\label{lalalast}
	\lim_{t\to\infty} \frac{1}{\e^{d(d-1)t}} \sum_{\vecr\in\scrF_{Q,\theta}} f\big(\vecr,n_-(\vecr)\Phi^{t}\big) \\
	= \frac{d-1}{\zeta(d)} \int_{0}^{|\log\theta|/(d-1)} \int_{\TT^{d-1}\times \Gamma_H\backslash H}  \widetilde f\big(\vecx,M\Phi^{-s} \big)   \, d\vecx \, d\mu_H(M)\, \e^{-d(d-1)s} ds .
\end{multline}
The asymptotics \eqref{asymQ} show that
\begin{equation}
	\limsup_{t\to\infty} \frac{|\scrF_Q\setminus \scrF_{Q,\theta}|}{\e^{d(d-1)t}}  \leq \frac{\theta^d}{d\,\zeta(d)} ,
\end{equation}
which allows us to take the limit $\theta\to 0$ in \eqref{lalalast}.
This concludes the proof for $\sigma=0$ and $f$ compactly supported. For the general case, recall the remarks in Step 0.
\end{proof}

\begin{remark}\label{remy} 
Let $(\vecp,q)\in\hatZZ$. Using the bijection \eqref{bij}, choose $\gamma\in\Gamma$ such that $(\vecp,q)\gamma=(\vecnull,1)$. For $\vecr=\vecp/q\in\scrF_Q+\ZZ^{d-1}$, we then have 
\begin{equation}
	\gamma^{-1} \trans(n_-(\vecr)D(q))^{-1} = \bigg( \begin{pmatrix} q^{-1/(d-1)} 1_{d-1} & \trans\vecnull \\ \vecp & q \end{pmatrix}\gamma \bigg)^{-1} \in H .
\end{equation}
That is,
\begin{equation}\label{newlyn}
	\Gamma \trans(n_-(\vecr)D(q))^{-1}  \in \Gamma\backslash\Gamma H ,
\end{equation}
and thus, for $Q=\e^{(d-1)(t-\sigma)}$,
\begin{equation}\label{newlyn2}
\Gamma \trans(n_-(\vecr)\Phi^t)^{-1}\in \Gamma\backslash\Gamma H \{ \Phi^{-s} : s\in\RR_{\geq\sigma} \} .
\end{equation}
\end{remark}

\begin{lem}\label{embedLem}
The set $\Gamma\backslash\Gamma H \{ \Phi^{-s} : s\in\RR_{\geq\sigma} \}$ is a closed embedded submanifold of $\GamG$. 
\end{lem}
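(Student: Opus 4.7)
My plan is to give an explicit description of $V^- := H\{\Phi^{-s}: s\in\RR_{\geq\sigma}\}$ as a subset of $G$, determine which $\Gamma$-translates of $V^-$ can meet $V^-$, and then run a single bounded-integer-vector argument yielding both closedness of $\Gamma V^-$ in $G$ and local injectivity of $\Gamma_H\backslash V^-\to\GamG$.

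First, since $H=\{M\in G:(\vecnull,1)M=(\vecnull,1)\}$ and $(\vecnull,1)\Phi^{-s}=(\vecnull,e^{-(d-1)s})$, one has
\begin{equation*}
V^-=\big\{M\in G:\text{the last row of }M\text{ is }(\vecnull,c)\text{ for some }c\in(0,e^{-(d-1)\sigma}]\big\}.
\end{equation*}
This makes it plain that $V^-$ is closed in $G$ (any limit in $G$ has $c>0$ by invertibility), and that $(h,s)\mapsto h\Phi^{-s}$ is a diffeomorphism $H\times[\sigma,\infty)\to V^-$ of manifolds with boundary.

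Second, I would check that the stabilizer of $V^-$ in $\Gamma$ equals $\Gamma_H$: if $\gamma M=N$ with $M,N\in V^-$ having last rows $(\vecnull,c)$, $(\vecnull,c')$, then since the last row of $M^{-1}$ is $(\vecnull,c^{-1})$ the last row of $\gamma=NM^{-1}$ equals $(\vecnull,c'/c)$; expanding $\det\gamma=1$ along this row forces $c'/c\in\ZZ_{>0}$, and combined with $c,c'\in(0,e^{-(d-1)\sigma}]$ this gives $c=c'$ and $\gamma\in\Gamma_H$. Consequently $\Gamma V^-=\bigsqcup_{\Gamma_H\gamma\in\Gamma_H\backslash\Gamma}\gamma^{-1}V^-$ is a disjoint union and $\Gamma_H\backslash V^-\to\GamG$ is injective.

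Third, both closedness of $\Gamma V^-$ in $G$ and local embeddedness follow from the following unified argument. Suppose $M_n\to M$ in $G$ with $M_n=\gamma_n N_n$, $\gamma_n\in\Gamma$, $N_n\in V^-$, and set $\vecv_n:=(\vecnull,1)\gamma_n^{-1}\in\hatZZ^d$. Then $\vecv_n M_n=(\vecnull,c_n)$ with $c_n\in(0,e^{-(d-1)\sigma}]$, so $\|\vecv_n\|\leq e^{-(d-1)\sigma}\|M_n^{-1}\|$ stays bounded; after passing to a subsequence, $\vecv_n=\vecv$ is a fixed primitive integer vector and $\vecv M=(\vecnull,c^*)$ with $c^*\in(0,e^{-(d-1)\sigma}]$, proving $M\in\Gamma V^-$. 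For local embeddedness at $M\in V^-$ one assumes in addition $\gamma_n\notin\Gamma_H$; writing $(\vecnull,1)M=(\vecnull,c_0)$ with $c_0>0$, the identity $\vecv M=(c^*/c_0)(\vecnull,1)M$ together with invertibility of $M$ gives $\vecv=(c^*/c_0)(\vecnull,1)$, and primitivity of $\vecv$ forces $\vecv=(\vecnull,1)$ and hence $\gamma_n\in\Gamma_H$, a contradiction. Combining these facts, $\Gamma\backslash\Gamma V^-\cong\Gamma_H\backslash V^-\cong(\ASLZ\backslash\ASLR)\times[\sigma,\infty)$ is a closed embedded submanifold (with boundary) of $\GamG$. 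The single algebraic input throughout is that a matrix in $\SLZ$ with last row $(\vecnull,k)$, $k\in\ZZ_{>0}$, must have $k=1$; the step requiring the most care is upgrading pointwise injectivity to a true embedding, which is why I bundle it with the closedness proof rather than invoking a black-box result about closed subgroup orbits.
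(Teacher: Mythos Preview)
Your proof is correct and takes a different, more hands-on route than the paper's. The paper identifies the set as the image of an immersion $i:\Gamma_H\backslash H\{D(y_d):y_d\in(0,c]\}\to\GamG$ and shows $i$ is \emph{proper} by invoking Mahler's compactness criterion (if $\Gamma M\in\scrK$ compact then $y_d=\|(\vecnull,1)M\|\geq\theta>0$), together with the known fact \cite{Raghunathan72} that $\Gamma\backslash\Gamma H$ is itself closed and embedded; a proper injective immersion is then automatically a closed embedding. You instead work from the explicit description $V^-=\{M\in G:\text{last row }=(\vecnull,c),\ 0<c\leq e^{-(d-1)\sigma}\}$ and run a direct sequence argument: the primitive integer vectors $\vecv_n=(\vecnull,1)\gamma_n^{-1}$ are bounded because $\vecv_n M_n=(\vecnull,c_n)$ with $c_n$ bounded and $M_n^{-1}$ bounded, and this single observation yields both closedness of $\Gamma V^-$ and the homeomorphism-onto-image property. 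Your approach is more elementary---no Mahler, no appeal to Raghunathan---at the cost of being slightly longer; the paper's is shorter but leans on those two black boxes. The underlying geometric content is the same in both: the last row of an element of $V^-$ gives a short lattice vector, and short lattice vectors cannot escape along a convergent sequence in $\GamG$.

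One small exposition issue in your second step: you write ``expanding $\det\gamma=1$ along this row forces $c'/c\in\ZZ_{>0}$, and combined with $c,c'\in(0,e^{-(d-1)\sigma}]$ this gives $c=c'$.'' The bound on $c,c'$ plays no role here. What you actually need is that $c'/c$ is an integer (being an entry of $\gamma\in\SLZ$), and then the cofactor expansion $1=\det\gamma=(c'/c)\cdot(\text{integer minor})$ forces $c'/c=\pm 1$, hence $c'/c=1$ by positivity.
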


\begin{proof}
The set 
\begin{equation}\label{theset}
	\Gamma\backslash\Gamma H \{ \Phi^{-s} : s\in\RR_{\geq \sigma} \} =
	\Gamma\backslash\Gamma H \{ D(y_d) : y_d\in (0,c] \}, \qquad c=\e^{-(d-1)\sigma},
\end{equation}
is the image of the immersion map
\begin{equation}
	i : \scrH_0 \to \GamG, \qquad \Gamma_H M \mapsto \Gamma M,
\end{equation}
\begin{equation}
	\scrH_0 := \GamH \{ D(y_d) : y_d\in (0,c] \},
\end{equation}
and is thus an immersed submanifold of $\GamG$. To show that it is in fact a closed embedded submanifold, we need to establish that $i$ is a proper map, i.e., every compact $\scrK\subset\GamG$ has a compact pre-image $i^{-1}(\scrK)$; see e.g. \cite[Chapter III]{Boothby86}. Since $i$ is continuous, $i^{-1}(\scrK)$ is closed. It therefore suffices to show that $i^{-1}(\scrK)$ is contained in a compact subset of $\scrH_0$. 

For $M\in G$, let $I(M)=\inf\{\| \vecm M \|:\vecm\in\ZZ^d\setminus\{\vecnull\}\}$.
By Mahler's criterion, there is $\theta>0$ such that $I(M)\geq \theta$ for all $M\in G$ with $\Gamma M\in\scrK$. 
If $\Gamma_H M \in i^{-1}(\scrK)$, then $I(M)\geq\theta$ with $M=h D(y_d)$, $h\in H$. Thus $(\vecnull,1) M=y_d$ and therefore $y_d\geq\theta$. This implies that, for any $h\in H$,
\begin{equation}
	i(\Gamma_H h)=\Gamma h
	\in \scrK' := \scrK \{ D(y_d)^{-1} : \theta \leq y_d \leq c \} ,
\end{equation}
where $\scrK'$ is a compact subset of $\GamG$. 

It is a basic fact that, since $H$ is a closed subgroup of $G$ and $\Gamma_H=\Gamma\cap H$ is a lattice in $H$, the set $\Gamma\backslash\Gamma H$ is a closed embedded submanifold of $\GamG$ \cite[Theorem 1.13]{Raghunathan72}. We denote by $j:\GamH\to\Gamma\backslash\Gamma H$ the immersion map. Thus $j^{-1}(\scrK')$ is a compact subset of $\GamH$, and $i^{-1}(\scrK)$ is contained in the compact subset $j^{-1}(\scrK') \{ D(y_d) : \theta \leq y_d \leq c \}$ of $\scrH_0$.
\end{proof}

The significance of \eqref{newlyn2} and Lemma \ref{embedLem} is that it allows us reduce the continuity hypotheses of Theorem \ref{equiThm1} and Remark \ref{remB} to continuity of $\widetilde f$ restricted to the closed embedded submanifold 
\begin{equation}
	\TT^{d-1}\times\Gamma\backslash\Gamma H  \{ \Phi^{-s} : s\in\RR_{\geq\sigma} \} .
\end{equation}
We will exploit this fact in the proof of Theorem \ref{equiThm2b}.

\section{A variant of Theorem \ref{equiThm1} \label{secVariant}}

The following variant of Theorem \ref{equiThm1} will be key in the proof of  Theorem \ref{mainThm}. Recall the definition of $\uveca$ and $D(T)$ in \eqref{uveca} and \eqref{Ddef}, respectively.

\begin{thm}\label{equiThm2}
Let $\scrD\subset\{ \vecx\in\RR^d: 0<x_1,\ldots,x_{d-1}\leq x_d\}$ be bounded with boundary of Lebesgue measure zero, and $f:\overline\scrD\times\GamG\to\RR$ bounded continuous. Then
\begin{equation}\label{eqThm2}
	\lim_{T\to\infty} \frac{1}{T^d} \sum_{\veca\in\hatZZ^d\cap T\scrD} f\bigg(\frac{\veca}{T}, n_-(\uveca)D(T)\bigg)  
	= \frac{1}{\zeta(d)} \int_{\scrD\times\GamH}\widetilde f\big(\vecy,M D(y_d)\big) \, d\vecy \, d\mu_H(M)  
\end{equation}
with $\widetilde f(\vecx,M):=f(\vecx,\trans M^{-1})$.
\end{thm}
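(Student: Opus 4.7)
The strategy is to reduce Theorem \ref{equiThm2} to Theorem \ref{equiThm1} by absorbing the ``extra'' coordinate $y_d = a_d/T$ into the test function via the identification given by Remark \ref{remy}. The bijection $\veca \leftrightarrow (\uveca, a_d)$ for primitive $\veca \in \hatZZ^d$ with $a_d > 0$ yields $\veca = a_d(\uveca, 1)$ and $\veca/T = y_d(\uveca, 1)$, while Remark \ref{remy} asserts that $y_d = \e^{-(d-1)s}$, where $s$ is the coordinate of the orbit point $\Gamma\trans(n_-(\uveca)\Phi^t)^{-1}$ on the closed embedded submanifold $\scrN := \Gamma\backslash\Gamma H\{\Phi^{-s}: s\ge\sigma\}$ supplied by Lemma \ref{embedLem}. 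Thus the sum $S_T := T^{-d}\sum_{\veca\in\hatZZ^d\cap T\scrD}$ naturally becomes a Farey sum whose integrand depends on position within $\scrN$.

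Following this, I would choose $\sigma$ with $\e^{-(d-1)\sigma} > y_d^{\max} := \sup_{\vecy \in \scrD} y_d$ and set $Q := \e^{(d-1)(t-\sigma)}$, so that every $\veca \in \hatZZ^d \cap T\scrD$ has $a_d \le Q$ and $\uveca \in \scrF_Q$. Next, define a test function $F$ on $\TT^{d-1} \times \GamG$ vanishing outside the submanifold dual to $\scrN$, and specified on it so that its transpose-inverse takes the explicit form on $\TT^{d-1}\times\scrN$:
\begin{equation*}
\widetilde F(\vecx, M\Phi^{-s}) := \widetilde f\bigl((\e^{-(d-1)s}\vecx, \e^{-(d-1)s}), M\Phi^{-s}\bigr)\,\mathbf{1}_{\{(\e^{-(d-1)s}\vecx,\, \e^{-(d-1)s})\in\scrD\}}.
\end{equation*}
By construction, $F(\vecr, n_-(\vecr)\Phi^t) = f(\veca/T, n_-(\uveca)D(T))\,\mathbf{1}_{\{\veca\in T\scrD\}}$ for each $\vecr = \uveca \in \scrF_Q$ with associated $\veca = a_d(\vecr, 1)$. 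Applying Theorem \ref{equiThm1} to $F$ in the extended form of Remark \ref{remB}, together with the reduction to submanifold continuity noted after Lemma \ref{embedLem}, and dividing by $|\scrF_Q|/T^d \to \e^{-d(d-1)\sigma}/(d\zeta(d))$, one obtains
\begin{equation*}
\lim_{T\to\infty} S_T = \frac{d-1}{\zeta(d)}\int_\sigma^\infty\int_{\TT^{d-1}\times\GamH}\widetilde F(\vecx, M\Phi^{-s})\,d\vecx\,d\mu_H(M)\,\e^{-d(d-1)s}ds.
\end{equation*}
The substitution $y_d = \e^{-(d-1)s}$ converts $(d-1)\e^{-d(d-1)s}\,ds$ into $y_d^{d-1}\,dy_d$ and $\Phi^{-s}$ into $D(y_d)$; the subsequent change $\vecy' = y_d\vecx$, with Jacobian $y_d^{d-1}$, then rewrites the right-hand side as $\zeta(d)^{-1}\int_{\scrD\times\GamH}\widetilde f(\vecy, MD(y_d))\,d\vecy\,d\mu_H(M)$, exactly matching the claimed limit.

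The main technical obstacle is verifying the hypotheses of the extended Theorem \ref{equiThm1}: $\widetilde F$ must be continuous on $\TT^{d-1}\times\scrN$ outside a set of zero $d\vecx\,d\mu_H\,ds$-measure. Continuity away from the jump locus of the indicator is inherited from $\widetilde f$, while the jump locus is the preimage of $\partial\scrD$ under the map $(\vecx, s)\mapsto(\e^{-(d-1)s}\vecx, \e^{-(d-1)s})$; since this map is a smooth diffeomorphism onto its image (for fixed $M$), the hypothesis $\vol(\partial\scrD) = 0$ together with Fubini yields the required measure-zero boundary. Standard approximation reduces the argument to compactly supported $f$ from the outset, and the choice $\e^{-(d-1)\sigma} > y_d^{\max}$ guarantees the indicator effectively truncates the $s$-integration.
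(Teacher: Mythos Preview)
Your approach is correct and arrives at the same conclusion as the paper, but the mechanism for introducing the dependence on $y_d=a_d/T$ is genuinely different. The paper never reads $y_d$ off the submanifold coordinate; instead it first applies Theorem~\ref{equiThm1} with a test function $g(\uveca,M)$ that does \emph{not} depend on $y_d$, obtains the limit for the slab $\{a_d\le cT\}$, takes differences to get slabs $\{bT<a_d\le cT\}$, and then passes from step functions in $y_d$ to continuous $h(\uveca,y_d,M)$ by approximation before the final change of variables $\vecy'=y_d\vecx$. Your route bypasses this layering: you use Lemma~\ref{embedLem} and the remark following it to recover $s$ (hence $y_d=\e^{-(d-1)s}$) as a well-defined continuous function on the closed embedded submanifold $\scrN$, so that a single application of Theorem~\ref{equiThm1} to a test function built on $\scrN$ suffices. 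This is more direct, but it front-loads the submanifold machinery that the paper reserves for Theorem~\ref{equiThm2b}; the paper's proof of Theorem~\ref{equiThm2} itself does not invoke Lemma~\ref{embedLem} at all and is in that sense more elementary.

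Two minor points you should tighten. First, the identification $\vecr=\uveca\in\scrF_Q$ is not literal on the torus boundary: $\scrF_Q\subset[0,1)^{d-1}$ while $\uveca\in(0,1]^{d-1}$, so the coordinates with $a_i=a_d$ need the periodization device $f(\vecx,M)=\sum_{\vecn}g(\vecx+\vecn,M)\chi_{[0,1]^{d-1}}(\vecx+\vecn)$ that the paper uses (or an explicit argument that this boundary is negligible). Second, ``applying Theorem~\ref{equiThm1} in the extended form of Remark~\ref{remB} together with the reduction to submanifold continuity'' hides a step: one must extend the continuous part of $\widetilde F|_{\TT^{d-1}\times\scrN}$ to $\TT^{d-1}\times\GamG$ (e.g.\ via Tietze, using that $\scrN$ is closed) before Remark~\ref{remB} applies; both sides of \eqref{equiThm1eq} then depend only on the restriction, so the extension is harmless.
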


\begin{proof}
Let $g:\RR^{d-1}\times\GamG\to\RR$ be a bounded continuous function.
We apply Theorem \ref{equiThm1} with $T=\e^{(d-1)t}$, $c=\e^{-(d-1)\sigma}$, and the test function
\begin{equation}
	f(\vecx,M)= \sum_{\vecn\in\ZZ^{d-1}} g(\vecx+\vecn,M) \chi_{[0,1]^{d-1}}(\vecx+\vecn).
\end{equation}
Note that this sum has at most $2^{d-1}$ non-zero terms.
The function $f(\vecx,M)$ is bounded everywhere, and continuous on $[(0,1)^{d-1}+\ZZ^{d-1}]\times\GamG$; hence Remark \ref{remB}, together with the asymptotics \eqref{asymQ}, yield
\begin{equation}
	\begin{split}
	\lim_{T\to\infty} & \frac{\zeta(d)}{T^d} \sum_{\substack{\veca\in\hatZZ^d\\ 1\leq a_1,\ldots,a_{d-1} \leq a_d \\ a_d\leq c T}} g\big(\uveca,n_-(\uveca)D(T)\big)  \\
	& = (d-1)\int_{\sigma}^\infty \int_{[0,1]^{d-1}\times\GamH} \widetilde g(\vecx, M \Phi^{-s}) \, d\vecx \, d\mu_H(M) \, \e^{-d(d-1)s} ds 
	\\
	& = \int_0^c \int_{[0,1]^{d-1}\times\GamH} \widetilde g\big(\vecx, M D(y_d)\big) \, d\vecx \, d\mu_H(M) \, y_d^{d-1} dy_d 
\end{split}
\end{equation}
where we have substituted in the last step $y_d=\e^{-(d-1)s}$. So for any $0\leq b< c$ we have
\begin{equation}
	\begin{split}
	\lim_{T\to\infty} & \frac{1}{T^d} \sum_{\substack{\veca\in\hatZZ^d\\ 1\leq a_1,\ldots,a_{d-1}\leq a_d \\ bT< a_d\leq c T}} g\big(\uveca,n_-(\uveca)D(T)\big)  \\
	& = \frac{1}{\zeta(d)}\int_b^c \int_{[0,1]^{d-1}\times\GamH} \widetilde g\big(\vecx, M D(y_d)\big) \, d\vecx \, d\mu_H(M) \, y_d^{d-1} dy_d ,
\end{split}
\end{equation}
and hence for $h:\RR^{d-1}\times\RR\times\GamG\to\RR$ continuous with support in $\RR^{d-1}\times\scrI\times\GamG$ and $\scrI\subset\RR_{\geq 0}$ bounded, we have
\begin{equation}\label{still}
	\begin{split}
	\lim_{T\to\infty} & \frac{1}{T^d} \sum_{\substack{\veca\in\hatZZ^d\\ 1\leq a_1,\ldots,a_{d-1}\leq a_d}} h\bigg(\uveca,\frac{a_d}{T},n_-(\uveca)D(T)\bigg)  \\
	& = \frac{1}{\zeta(d)}\int_{[0,1]^{d-1}\times\scrI\times\GamH} \widetilde h\big(\vecx,y_d, M D(y_d)\big) \, d\vecx  \, y_d^{d-1} dy_d \, d\mu_H(M).
\end{split}
\end{equation}
We now take $h(\vecx,y_d, M)=\chi_{\scrD}(\vecx y_d,y_d)\,f((\vecx y_d,y_d), M)$ with $f$ as in Theorem \ref{equiThm2}, and substitute $\vecy'=\vecx y_d$. Note that with this choice $h$ is no longer continuous; but $\scrD$ has boundary of measure zero and thus Remark \ref{remB} applies. 
\end{proof}

Remark \ref{remy} and Theorem \ref{equiThm2} now imply the following theorem. Given a bounded subset $\scrD\subset\RR_{\geq 0}^d$, define
\begin{equation}
	\scrM_\scrD = \big\{ (\vecy,\Gamma \trans M^{-1} D(y_d)^{-1} ): (\vecy,\Gamma M)\in\overline\scrD\times \Gamma\backslash\Gamma H\big\} ,
\end{equation}
which, in view of Lemma \ref{embedLem}, is a closed embedded submanifold of $\RR^d\times\GamG$. 
The bijection 
\begin{equation}\label{bije}
	\overline\scrD\times\Gamma_H\backslash H \to \scrM_\scrD, \qquad(\vecy,\Gamma_H M )\mapsto (\vecy,\Gamma \trans M^{-1} D(y_d)^{-1} ),
\end{equation}
allows us to define a natural measure $\nu$ on $\scrM_\scrD$ as the pushforward of $\vol\times\mu_H$, where $\vol$ is Lebesgue measure on $\RR^d$ and $\mu_H$ as defined in \eqref{siegel2}.
In the following we understand the interior and closure of subsets of $\scrM_\scrD$ with respect to the topology of $\scrM_\scrD$.

Since $n_-(\uveca)D(T)=n_-(\uveca)D(a_d)D(a_d/T)^{-1}$,
eq.~\eqref{newlyn} implies that
\begin{equation}\label{mini}
	\bigg(\frac{\veca}{T}, \Gamma n_-(\uveca)D(T)\bigg) \subset\scrM_\scrD.
\end{equation}

\begin{thm}\label{equiThm2b}
Let $\scrD\subset\{ \vecx\in\RR^d: 0<x_1,\ldots,x_{d-1}\leq x_d\}$ be bounded with boundary of Lebesgue measure zero, and $\scrA\subset\scrM_\scrD$. Then
\begin{equation}\label{eqThm2b}
	\liminf_{T\to\infty} \frac{1}{T^d} \#\bigg\{ \veca\in\hatZZ^d : \bigg(\frac{\veca}{T}, \Gamma n_-(\uveca)D(T)\bigg)\in\scrA\bigg\} 
	\geq \frac{\nu\big(\scrA^\circ \big)}{\zeta(d)}   
\end{equation}
and
\begin{equation}\label{eqThm2bb}
	\limsup_{T\to\infty} \frac{1}{T^d} \#\bigg\{ \veca\in\hatZZ^d : \bigg(\frac{\veca}{T}, \Gamma n_-(\uveca)D(T)\bigg)\in\scrA\bigg\}  
	\leq \frac{\nu\big(\overline{\scrA} \big)}{\zeta(d)}   .
\end{equation}
\end{thm}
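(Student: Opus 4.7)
The plan is to derive \eqref{eqThm2b} and \eqref{eqThm2bb} from Theorem \ref{equiThm2} by the standard sandwich argument, approximating $\chi_\scrA$ by continuous functions on $\overline\scrD \times \GamG$. Two structural facts make this work: by Lemma \ref{embedLem}, $\scrM_\scrD$ is a closed embedded submanifold of $\RR^d \times \GamG$, so $\nu$ is a finite Radon measure on $\scrM_\scrD$; and by \eqref{mini}, every test point $(\veca/T,\Gamma n_-(\uveca) D(T))$ lies on $\scrM_\scrD$, so only the restriction of a test function to $\scrM_\scrD$ matters for the left-hand sum in \eqref{eqThm2}.

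To prove \eqref{eqThm2b}, fix $\epsilon>0$. Inner regularity of $\nu$ yields a compact set $\scrK\subset\scrA^\circ$ (interior taken in $\scrM_\scrD$) with $\nu(\scrK)>\nu(\scrA^\circ)-\epsilon$. Urysohn's lemma in the locally compact Hausdorff space $\scrM_\scrD$ produces a continuous $h:\scrM_\scrD\to[0,1]$ with $h\equiv 1$ on $\scrK$ and $\supp h\subset\scrA^\circ$. Since $\scrM_\scrD$ is closed in the metrizable (hence normal) space $\overline\scrD\times\GamG$, the Tietze extension theorem yields a bounded continuous $g:\overline\scrD\times\GamG\to[0,1]$ restricting to $h$ on $\scrM_\scrD$.

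The crucial step is to identify the right hand side of \eqref{eqThm2} applied to $g$ with $\zeta(d)^{-1}\int_{\scrM_\scrD} h\,d\nu$. Since $D(y_d)$ is symmetric, $\trans(MD(y_d))^{-1}=\trans M^{-1}D(y_d)^{-1}$, and therefore
\begin{equation*}
	\widetilde g\bigl(\vecy,MD(y_d)\bigr)=g\bigl(\vecy,\Gamma\trans M^{-1}D(y_d)^{-1}\bigr)=h\bigl(\vecy,\Gamma\trans M^{-1}D(y_d)^{-1}\bigr),
\end{equation*}
which under the pushforward parametrization \eqref{bije} of $\nu$ integrates precisely to $\int_{\scrM_\scrD}h\,d\nu$. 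Applying Theorem \ref{equiThm2}, and using $h\leq\chi_\scrA$ on $\scrM_\scrD$ together with \eqref{mini}, we deduce
\begin{equation*}
	\liminf_{T\to\infty}\frac{1}{T^d}\#\bigg\{\veca\in\hatZZ^d:\bigg(\frac{\veca}{T},\Gamma n_-(\uveca)D(T)\bigg)\in\scrA\bigg\}\geq \frac{1}{\zeta(d)}\int_{\scrM_\scrD}h\,d\nu\geq\frac{\nu(\scrA^\circ)-\epsilon}{\zeta(d)},
\end{equation*}
and \eqref{eqThm2b} follows on letting $\epsilon\to 0$. The upper bound \eqref{eqThm2bb} is obtained symmetrically using outer regularity of $\nu$ to produce a continuous $h\geq\chi_{\overline\scrA}$ on $\scrM_\scrD$ with $\int h\,d\nu$ close to $\nu(\overline\scrA)$; here one needs the additional observation that, since $\partial\scrD$ has Lebesgue measure zero, the lattice points in $T(\overline\scrD\setminus\scrD)$ contribute $o(T^d)$, so that the sum over $\hatZZ^d\cap T\scrD$ provided by Theorem \ref{equiThm2} controls the full count.

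The main technical point requiring care is the transposition identity, which is what makes the Tietze extension compatible with the pushforward measure $\nu$. An alternative route, hinted at by the paper's remark following Lemma \ref{embedLem}, would be to weaken the continuity hypotheses of Theorem \ref{equiThm1} and Remark \ref{remB} to continuity on the closed embedded submanifold only, thus bypassing the global Tietze extension and allowing one to work directly with functions defined on $\scrM_\scrD$.
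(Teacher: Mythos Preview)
Your proof is correct and follows essentially the same route as the paper's. The paper's own argument is very terse: it observes, via \eqref{mini} and Lemma \ref{embedLem}, that Theorem \ref{equiThm2} extends to bounded continuous functions defined only on $\scrM_\scrD$, and then invokes the standard Portmanteau-type argument (citing Shiryaev) to pass from continuous test functions to indicator functions of sets. Your proposal simply unpacks that citation, carrying out the inner/outer regularity and Urysohn steps explicitly, and replaces the paper's ``restrict the continuity hypothesis to $\scrM_\scrD$'' manoeuvre by an equivalent Tietze extension back to $\overline\scrD\times\GamG$; you even note this alternative at the end.
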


\begin{proof}
The inclusion \eqref{mini} shows that the limit relation \eqref{eqThm2} in Theorem \ref{equiThm2} holds for any bounded continuous function $f: \scrM_\scrD \to \RR$. We can thus once more apply the above probabilistic argument \cite[Chapter III]{Shiryaev} (used in the justification of Theorem \ref{equiThm0b}) to prove \eqref{eqThm2b} and \eqref{eqThm2bb}.
\end{proof}

\section{Upper and lower limits \label{secUpp}}

Let us first of all note that we may assume in Theorem \ref{mainThm} without loss of generality that $\scrD\subset [0,1]^d$. Secondly, due to the symmetry of $F(\veca)$ under any permutation of the coefficients $a_i$, we may assume that $\scrD\subset\{\vecx\in\RR^d: 0\leq x_1,\ldots,x_{d-1} \leq x_d \}$.
Thirdly, it is sufficient to prove Theorem \ref{mainThm} for all bounded subsets of $\{\vecx\in\RR^d: \eta \leq x_1,\ldots,x_{d-1} \leq x_d \}$,
for any fixed $\eta>0$. This is due to the fact that for any bounded set $\scrD\subset [0,1]^d$ with boundary of measure zero, 
\begin{equation}
	\lim_{T\to\infty} \frac{1}{T^d} \#\big\{ \veca\in\hatZZ^d\cap T\big(\scrD\setminus\RR_{\geq\eta}^d\big) \big\}
	=\frac{\vol\big(\scrD\setminus\RR_{\geq\eta}^d\big)}{\zeta(d)} \leq 
	\frac{d\, \eta}{\zeta(d)} .
\end{equation}

We will therefore assume in the remainder of this section that, in addition to the assumptions of Theorem \ref{mainThm},
\begin{equation}\label{DDT}
	\scrD\subset \{\vecx\in\RR^d: \eta \leq x_1,\ldots,x_{d-1} \leq x_d \leq 1\} ,
\end{equation}
for arbitrary fixed $\eta>0$.

The following is an immediate corollary of Theorem \ref{WThm} (set $T=\e^{(d-1)t}$ and recall that $W_\delta(\lambda \vecalf,M) = \lambda W_\delta(\vecalf,M)$ for any $\lambda>0$).

\begin{lem}\label{lamel}
Let $\veca\in\hatZZ_{\geq 2}^d\cap T\scrD$ with $\scrD$ as in \eqref{DDT}, and $0<\delta\leq \frac12$. Then
\begin{equation}
	\bigg| \frac{F(\veca)}{(a_1\cdots a_d)^{1/(d-1)}} 
	-  \frac{W_\delta(\vecy', n_-(\uveca)D(T))}{(y_1\cdots y_d)^{1/(d-1)}} \bigg| \leq \frac{d}{\eta\, T^{1/(d-1)}},
\end{equation}
where $\vecy=T^{-1}\veca$.
\end{lem}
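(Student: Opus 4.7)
The plan is to apply Theorem \ref{WThm} directly and reduce the statement to an elementary estimate of the error term $\vece\cdot\veca$ measured against $(a_1\cdots a_d)^{1/(d-1)}$.

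First I would choose $t$ with $T=\e^{(d-1)t}$, so that $\Phi^t=D(T)$ and $\e^t=T^{1/(d-1)}$. The hypothesis \eqref{DDT} ensures $a_i=Ty_i\leq Ty_d=a_d\leq T=\e^{(d-1)t}$ for $i<d$, so Theorem \ref{WThm} applies and yields $F(\veca)=\e^t W_\delta(\veca',n_-(\uveca)D(T))-\vece\cdot\veca$. Next I would invoke the positive homogeneity $W_\delta(\lambda\vecalf,M)=\lambda W_\delta(\vecalf,M)$ for $\lambda>0$, which is immediate from \eqref{fdef}: the constraint $(\vecm+\vecxi)M\in\scrR_\delta$ does not involve $\vecalf$, and the objective $(\vecm+\vecxi)M\cdot(\vecalf,0)$ is linear in $\vecalf$. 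Applied with $\lambda=T$, this converts $W_\delta(\veca',\,\cdot\,)$ into $T\,W_\delta(\vecy',\,\cdot\,)$. Dividing the identity of Theorem \ref{WThm} by $(a_1\cdots a_d)^{1/(d-1)}=T^{d/(d-1)}(y_1\cdots y_d)^{1/(d-1)}$ then gives
\begin{equation*}
\frac{F(\veca)}{(a_1\cdots a_d)^{1/(d-1)}} - \frac{W_\delta(\vecy',n_-(\uveca)D(T))}{(y_1\cdots y_d)^{1/(d-1)}} = -\frac{\vece\cdot\veca}{(a_1\cdots a_d)^{1/(d-1)}},
\end{equation*}
and it remains only to bound the right-hand side in absolute value.

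The main (mild) obstacle is to show that this error is at most $d/(\eta\,T^{1/(d-1)})$. The estimate I would use rewrites each summand as
\begin{equation*}
\frac{a_j}{(a_1\cdots a_d)^{1/(d-1)}} = \frac{a_j^{(d-2)/(d-1)}}{\prod_{i\neq j} a_i^{1/(d-1)}}
\end{equation*}
and applies the two consequences of \eqref{DDT}: the upper bound $a_j\leq T$ on the numerator and the uniform lower bound $a_i\geq\eta T$ on each of the $d-1$ factors of the denominator. These give $a_j^{(d-2)/(d-1)}\leq T^{(d-2)/(d-1)}$ and $\prod_{i\neq j}a_i^{1/(d-1)}\geq \eta T$, so each summand is $\leq 1/(\eta\,T^{1/(d-1)})$; summing over $j=1,\ldots,d$ yields the claim. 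No step presents a serious difficulty—the verification of the homogeneity of $W_\delta$ and the elementary denominator bound (which crucially uses both the uniform upper and lower bounds from \eqref{DDT}) are all that is required.
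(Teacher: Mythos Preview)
Your proposal is correct and follows exactly the approach the paper indicates: the paper states the lemma as ``an immediate corollary of Theorem \ref{WThm} (set $T=\e^{(d-1)t}$ and recall that $W_\delta(\lambda\vecalf,M)=\lambda W_\delta(\vecalf,M)$ for any $\lambda>0$)'' without further details, and your argument fills in precisely those steps together with the elementary bound on $\vece\cdot\veca/(a_1\cdots a_d)^{1/(d-1)}$ that the paper leaves implicit.
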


In view of this lemma, the plan is thus to apply Theorem \ref{equiThm2b} with the set
\begin{equation}\label{fff}
	\scrA=\scrA_R=\bigg\{ (\vecy,\Gamma \trans M^{-1} D(y_d)^{-1}): \vecy\in\scrD,\; M\in\Gamma\backslash\Gamma H,\; \frac{W_\delta(\vecy', \trans M^{-1} D(y_d)^{-1})}{(y_1\cdots y_d)^{1/(d-1)}} > R \bigg\} .
\end{equation}
In the following, let 
\begin{equation}\label{MandM}
	M=\begin{pmatrix} A  & \trans\vecb \\ \vecnull & 1 \end{pmatrix} 
	\in H   ,
\end{equation}
where $A\in G_0$, $\vecb\in\RR^{d-1}$. Then
\begin{equation}
	\trans M^{-1}=\begin{pmatrix} \trans A^{-1}  & \trans\vecnull \\ -\vecb \trans A^{-1} & 1 \end{pmatrix}  ,
\end{equation}
and
\begin{equation}
	(\vecm+\vecxi)\trans M^{-1} D(y_d)^{-1} = \big( \big(\vecm'+\vecxi'-(m_d+\xi_d)\vecb \big) \trans A^{-1} y_d^{1/(d-1)}, (m_d+\xi_d) y_d^{-1} \big).
\end{equation}
Assuming $\xi_d\in(-\frac12,\frac12]$, we deduce that, for all $0<\delta\leq \frac12$, the statement
$(m_d+\xi_d)y_d^{-1}\in(-\delta,\delta)$ implies $m_d=0$ since $0<y_d\leq 1$. Therefore,
\begin{equation}\label{fdef2a}
\begin{split}
	& W_\delta(\vecalf,\trans M^{-1}D(y_d)^{-1}) \\
	& = \sup_{\vecxi\in\TT^d} \minplu\big\{ (\vecm+\vecxi)\trans M^{-1}D(y_d)^{-1} \cdot(\vecalf,0) :
	\vecm\in\ZZ^{d},\; (\vecm+\vecxi) \trans M^{-1} D(y_d)^{-1} \in \scrR_\delta \big\} \\
	& = y_d^{1/(d-1)} \sup_{\substack{\vecxi'\in\TT^{d-1}\\ \xi_d\in(-\delta y_d,\delta y_d)}} \minplu \big\{ (\vecm'+\vecxi'-\xi_d\vecb)\trans A^{-1} \cdot \vecalf :  \vecm'\in\ZZ^{d-1},\; (\vecm'+\vecxi'-\xi_d\vecb)\trans A^{-1}\in\RR_{\geq 0}^{d-1}\big\} .
\end{split}
\end{equation}
The substitution $\vecxi'\mapsto \vecxi'+\xi_d\vecb$ explains that the above supremum is independent of $\vecb$. So
\begin{equation}\label{fdef2b}
\begin{split}
	& W_\delta(\vecalf,\trans M^{-1}D(y_d)^{-1}) \\
	& = y_d^{1/(d-1)} \sup_{\vecxi'\in\TT^{d-1}} \minplu \big\{ (\vecm'+\vecxi')\trans A^{-1} \cdot \vecalf : \vecm'\in\ZZ^{d-1},\; (\vecm'+\vecxi')\trans A^{-1}\in\RR_{\geq 0}^{d-1}\big\} \\
	& =y_d^{1/(d-1)} V(\vecalf,\trans A^{-1}) ,
\end{split}
\end{equation}
where
\begin{equation}
\begin{split}
	V(\vecalf,A) & =\sup_{\veczeta\in\TT^{d-1}} \minplu \big\{ (\vecn+\veczeta) A \cdot \vecalf : \vecn\in\ZZ^{d-1},\; (\vecn+\veczeta)A\in\RR_{\geq 0}^{d-1}\big\} \\
	& = \sup_{\veczeta\in\TT^{d-1}} \minplu \big( (\ZZ^{d-1}+\veczeta) A\cap \RR_{\geq 0}^{d-1}\big) \cdot\vecalf .
\end{split}
\end{equation}
Now set $\vecalf=\vecy'=(y_1,\ldots,y_{d-1})$, and 
\begin{equation}
Y=(y_1\cdots y_{d-1})^{-1/(d-1)}\diag(y_1,\ldots,y_{d-1})\in G_0,
\end{equation}
so that $\vecy'=(y_1\cdots y_{d-1})^{1/(d-1)} \vece Y$. Then
\begin{equation}
	V(\vecy',A) = (y_1\cdots y_{d-1})^{1/(d-1)} V(\vece,A Y)
\end{equation}
and hence
\begin{equation}
	\frac{W_\delta(\vecy',\trans M^{-1}D(y_d)^{-1})}{(y_1\cdots y_{d})^{1/(d-1)}} =  V(\vece,\trans A^{-1} Y) .
\end{equation}
Set 
\begin{equation}
	V(A):=V(\vece,A)= \sup_{\veczeta\in\TT^{d-1}} \min \big( (\ZZ^{d-1}+\veczeta) A\cap \RR_{\geq 0}^{d-1}\big) \cdot\vece.
\end{equation}
We conclude that
\begin{equation}\label{fff222}
	\scrA_R=\bigg\{ \big(\vecy, \Gamma\trans M^{-1}D(y_d)^{-1}\big): (\vecy,M)\in\scrD\times\Gamma\backslash \Gamma H,\; V(\trans A^{-1} Y) > R \bigg\}.
\end{equation}

\begin{lem}
$V(A)$ is a continuous function on $\GamGG$.
\end{lem}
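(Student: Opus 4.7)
Since $V(\gamma A) = V(A)$ for every $\gamma\in\Gamma_0$ (an immediate consequence of $\ZZ^{d-1}\gamma = \ZZ^{d-1}$ together with the bijection $\veczeta\mapsto\veczeta\gamma$ of $\TT^{d-1}$), it is enough to prove $V$ is continuous on $G_0$. I will first reformulate: writing $\vecw = \veczeta A$ and using $\scrL_A$-periodicity with $\scrL_A := \ZZ^{d-1}A$, one has $V(A)=\sup_\vecw f_A(\vecw)$, where $f_A(\vecw) := \min\{(\vecw+\vecl)\cdot\vece : \vecl\in\scrL_A,\;\vecw+\vecl\in\RR_{\geq 0}^{d-1}\}$. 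The minimum is attained because the admissible $\vecl$ yielding values near the infimum lie in a bounded subset of $\scrL_A$, and the standard argument shows that $f_A$ is lower semicontinuous in $\vecw$.

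For lower semicontinuity of $V$ at $A_0$, the plan is to choose a \emph{generic} test point at which small perturbations of $A$ leave the admissible set of $\vecl$ unchanged. Fix $\delta>0$; the set $\{\vecw: f_{A_0}(\vecw)>V(A_0)-\delta\}$ is open (by the LSC of $f_{A_0}$) and non-empty, hence contains a point $\vecw_0$ with the property that $\vecw_0+\vecm A_0$ has no vanishing coordinate for any $\vecm\in\ZZ^{d-1}$ (the exceptional set being a countable union of hyperplanes, and so of measure zero). Only bounded $\vecm$ contribute to the minimum, and each such $\vecw_0+\vecm A_0$ has either all strictly positive or at least one strictly negative coordinate---a condition stable under small perturbations of $A_0$---so the admissible set and the resulting value $f_A(\vecw_0)$ vary continuously with $A$ near $A_0$. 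Thus $\liminf_{A\to A_0}V(A)\geq f_{A_0}(\vecw_0)>V(A_0)-\delta$, and sending $\delta\to 0$ completes this half.

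For upper semicontinuity I will argue by contradiction: suppose $A_n\to A_0$ with $V(A_n)\to V^*>V(A_0)$. Pick $\vecw_n$ in a bounded set (using $\scrL_{A_n}$-periodicity and convergence of fundamental domains) with $f_{A_n}(\vecw_n)\geq V(A_n)-1/n$, and extract a subsequence with $\vecw_n\to\vecw^*$. The first key observation is that, for any $\vecm$ with $\vecw^*+\vecm A_0\in\RR_{>0}^{d-1}$ (strict orthant interior), $\vecm$ is admissible for $f_{A_n}(\vecw_n)$ for $n$ large, whence $(\vecw_n+\vecm A_n)\cdot\vece\geq f_{A_n}(\vecw_n)$, and passing to the limit yields $(\vecw^*+\vecm A_0)\cdot\vece\geq V^*$. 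The second key step is a \emph{downward shift}: for any $\epsilon>0$, every $\vecm$ admissible for $f_{A_0}(\vecw^*-\epsilon\vece)$ automatically satisfies $\vecw^*+\vecm A_0\geq\epsilon\vece$ componentwise, hence strict positivity, so the preceding bound applies and gives $(\vecw^*-\epsilon\vece+\vecm A_0)\cdot\vece\geq V^*-(d-1)\epsilon$. Taking the minimum over such $\vecm$ yields $V(A_0)\geq f_{A_0}(\vecw^*-\epsilon\vece)\geq V^*-(d-1)\epsilon$, and letting $\epsilon\to 0$ produces the contradiction $V(A_0)\geq V^*$.

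The main obstacle throughout is the discontinuity introduced by the orthant constraint $\vecw+\vecl\in\RR_{\geq 0}^{d-1}$, which allows a lattice point to pass between feasible and infeasible under perturbation and could a priori cause $f_A(\vecw)$ to jump. The lower-semicontinuous direction sidesteps this by testing at a generic $\vecw_0$ where no such crossing occurs under small $A$-perturbation; the upper-semicontinuous direction sidesteps it by the downward shift, which promotes every boundary-feasible configuration at $\vecw^*-\epsilon\vece$ into a strict-interior one at $\vecw^*$, precisely where the limit bound on the $\vece$-value has already been established.
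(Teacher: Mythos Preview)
Your proof is correct, and it shares with the paper the essential device of the $\epsilon\vece$-shift to tame the orthant boundary: if $\vecw+\vecm A'-\epsilon\vece\in\RR_{\geq 0}^{d-1}$ then $\vecw+\vecm A\in\RR_{\geq 0}^{d-1}$ for $A$ close to $A'$, which lets one compare minima over slightly different feasible sets. The paper, however, packages this into a single symmetric estimate: it fixes a compact $\scrC\subset G_0$, reduces to a finite set $S\subset\ZZ^{d-1}$ of relevant lattice vectors, obtains $\|\vecm A-\vecm A'\|<\epsilon$ uniformly for $\vecm\in S$ and $d(A,A')<\delta$, and then the shift yields $|V(A)-V(A')|\leq(1+\sqrt d+d)\epsilon$ directly, with the two inequalities coming from interchanging $A$ and $A'$. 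Your argument instead splits into lower and upper semicontinuity. For USC you use exactly the paper's shift, recast as a contradiction argument; for LSC you introduce a genuinely different idea, namely testing at a \emph{generic} $\vecw_0$ (off the countable union of hyperplanes $\{\vecw:\vece_i\cdot(\vecw+\vecm A_0)=0\}$) so that no feasibility change can occur under perturbation of $A$, avoiding the shift altogether in that direction. The paper's route buys an explicit modulus of continuity on compacta; yours is slightly more conceptual and isolates cleanly where the discontinuity risk lies, at the cost of a less uniform statement and a subsequence extraction.
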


\begin{proof}
We have $V(\gamma A)=V(A)$ for all $\gamma\in\Gamma_0$ by the same argument as in \eqref{Winv}, and hence $V(A)$ is a function on $\GamGG$. It is sufficient to establish the continuity of $V(A)$ on compact subsets of $G_0$. Let us thus fix a compact set $\scrC\subset G_0$, and define
\begin{equation}
	K=\{ \veczeta A: \veczeta\in[0,1]^{d-1}, \; A\in\scrC \},
\end{equation}
which is a compact subset of $\RR^{d-1}$.
Then, for all $A\in\scrC$,
\begin{equation}
	V(A) = \sup_{\vecx\in L} \min \big( (\ZZ^{d-1}A+\vecx) \cap \RR_{\geq 0}^{d-1}\big) \cdot\vece,
\end{equation}
where $L$ is any set containing $K$.
Clearly $V(A)$ is bounded on $\scrC$, i.e., there is $R>0$ such that $V(A)\leq R$ for all $A\in\scrC$. Thus
\begin{equation}
	V(A) = \sup_{\vecx\in L} \min \big( (\ZZ^{d-1}A+\vecx) \cap R\Delta \big) \cdot\vece,
\end{equation}
where $\Delta$ is the simplex \eqref{simplex}. For $K'=K+[-1,1]\vece$,
\begin{equation}
	S=\ZZ^{d-1} \cap \bigcup_{A\in\scrC}\bigcup_{\vecx\in K'} \big((R\Delta-\vecx)A^{-1} \big)
\end{equation}
is a finite subset of $\ZZ^{d-1}$, and we have
\begin{equation}
	V(A) = \sup_{\vecx\in K'} \min_{\vecm\in S} \big( (\vecm A+\vecx) \cap \RR_{\geq 0}^{d-1} \big) \cdot\vece
\end{equation}
for all $A\in\scrC$. (The reason why we use $K'$ rather than $K$ in the definition of $S$ will become clear below.)

Fix $\epsilon\in(0,1)$. Then there exists $\delta>0$ such that, for all $A,A'\in\scrC$ with $d(A,A')<\delta$, we have 
\begin{equation}\label{AA}
	\|\vecm A-\vecm A'\|<\epsilon \qquad \text{for all $\vecm\in S$.}
\end{equation}
Thus, for any $\vecm\in S$ we have
\begin{equation} \label{first}
	\vecm A'+\vecx-\epsilon\vece\in\RR_{\geq 0}^{d-1} \quad \text{implies} \quad \vecm A+\vecx\in\RR_{\geq 0}^{d-1} , 
\end{equation}
and secondly
\begin{equation}\label{then}
\begin{split}
	(\vecm A'+\vecx-\epsilon\vece)\cdot\vece & =  (\vecm A'+\vecx)\cdot\vece - d\epsilon\\
	& \geq (\vecm A+\vecx)\cdot\vece - (\sqrt d+ d)\epsilon .
\end{split}
\end{equation}
Now choose $\vecx\in K$ such that
\begin{equation}
	\min \big( (\ZZ^{d-1} A+\vecx) \cap \RR_{\geq 0}^{d-1} \big) \cdot\vece \geq V(A) - \epsilon .
\end{equation}
Then \eqref{first} and \eqref{then} yield
\begin{equation}
	\min_{\vecm\in S} \big( (\vecm A'+\vecx-\epsilon\vece) \cap \RR_{\geq 0}^{d-1} \big) \cdot\vece
	\geq V(A) - (1+\sqrt d+ d) \epsilon.
\end{equation}
Since $\vecx-\epsilon\vece\in K'$ (because $\vecx\in K$ and $0<\epsilon<1$), the left hand side is at most $V(A')$. That is, $V(A')\geq V(A) - (1+\sqrt d+ d) \epsilon$. We conclude by interchanging $A$ and $A'$ that
\begin{equation}
	|V(A') - V(A)|\leq (1+\sqrt d+ d)\epsilon.
\end{equation}
for all $A,A'\in\scrC$ with $d(A,A')<\delta$.
\end{proof}

Since $V(A)$ is continuous, we have for any $\epsilon\in(0,R]$,
\begin{equation}
	\scrA_{R+\epsilon} \subset \scrA_R^\circ  \subset \overline\scrA_R \subset  \scrA_{R-\epsilon} .
\end{equation}

Define the  function $\Psi_d: \RR_{\geq 0} \to [0,1]$ by
\begin{equation}\label{limdi}
	\Psi_d(R):=\mu_0\big(\big\{ A\in \GamGG: V(A) > R \big\}\big),
\end{equation}
which is non-increasing. Note that by the invariance of $\mu_0$ under the right $G_0$-action and under $A\mapsto \trans A^{-1}$, we have 
\begin{equation}
	\Psi_d(R)=\mu_0\big(\big\{ A\in \GamGG: V(\trans A^{-1} Y) > R \big\}\big).
\end{equation}

As to the right hand sides of \eqref{eqThm2b} and \eqref{eqThm2bb}, the above calculations show that for any $\epsilon\in(0,R]$,
\begin{equation}
	\nu\big(\scrA_R^\circ\big) \geq \vol(\scrD)\,\Psi_d(R+\epsilon)
\end{equation}
and
\begin{equation}
	\nu\big(\overline\scrA_R\big) \leq \vol(\scrD)\,\Psi_d(R-\epsilon).
\end{equation}
Thus, combining these inequalities with Theorem \ref{equiThm2b} and Lemma \ref{lamel}, we obtain the following.

\begin{lem}
Let $R>0$. For any $\epsilon\in(0,R]$,
\begin{equation}
	\liminf_{T\to\infty} \frac{1}{T^d} \#\bigg\{ \veca\in\hatZZ_{\geq 2}^d\cap T\scrD : \frac{F(\veca)}{(a_1\cdots a_d)^{1/(d-1)}} >R \bigg\}
	\geq \frac{\vol(\scrD)}{\zeta(d)}\, \Psi_d(R+\epsilon) ,
\end{equation}
\begin{equation}
	\limsup_{T\to\infty} \frac{1}{T^d} \#\bigg\{ \veca\in\hatZZ_{\geq 2}^d\cap T\scrD : \frac{F(\veca)}{(a_1\cdots a_d)^{1/(d-1)}} >R \bigg\}
	\leq \frac{\vol(\scrD)}{\zeta(d)}\, \Psi_d(R-\epsilon) .
\end{equation}
\end{lem}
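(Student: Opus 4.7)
The plan is to assemble the three ingredients already prepared in this section: the pointwise approximation of Lemma~\ref{lamel}, the counting equidistribution of Theorem~\ref{equiThm2b}, and the chain $\scrA_{R+\epsilon}\subset\scrA_R^\circ\subset\overline{\scrA_R}\subset\scrA_{R-\epsilon}$ with its associated bounds $\nu(\scrA_R^\circ)\geq\vol(\scrD)\Psi_d(R+\epsilon)$ and $\nu(\overline{\scrA_R})\leq\vol(\scrD)\Psi_d(R-\epsilon)$, just recorded. The strategy is to split the target margin $\epsilon$ into two halves: one half will be absorbed by the approximation error in Lemma~\ref{lamel}, the other by the continuity-of-$V$ inclusion.

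First I fix $\delta=\tfrac12$ and exploit the reduction to $\scrD\subset\{\eta\leq x_1,\ldots,x_{d-1}\leq x_d\leq 1\}$ justified at the beginning of the section. Choosing $T_0(\epsilon,\eta)$ so that $d/(\eta T^{1/(d-1)})<\epsilon/2$ and $\eta T>2$ for every $T\geq T_0$, Lemma~\ref{lamel} yields, with $\vecy=\veca/T$, the implications
\[
\bigl(\vecy,\Gamma n_-(\uveca)D(T)\bigr)\in\scrA_{R+\epsilon/2}\ \Longrightarrow\ \frac{F(\veca)}{(a_1\cdots a_d)^{1/(d-1)}}>R\ \Longrightarrow\ \bigl(\vecy,\Gamma n_-(\uveca)D(T)\bigr)\in\scrA_{R-\epsilon/2}.
\]
The inequality $\eta T>2$ ensures that every $\veca\in\hatZZ^d\cap T\scrD$ already lies in $\hatZZ_{\geq 2}^d$, so the counting conventions of Lemma~\ref{lamel} and Theorem~\ref{equiThm2b} coincide on this range.

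Next I apply Theorem~\ref{equiThm2b} to the admissible sets $\scrA_{R\pm\epsilon/2}\subset\scrM_\scrD$ (admissibility is exactly the inclusion~\eqref{mini}), obtaining
\[
\liminf_{T\to\infty}\frac{1}{T^d}\,\#\bigl\{\cdots>R\bigr\}\geq\frac{\nu(\scrA_{R+\epsilon/2}^\circ)}{\zeta(d)},\qquad \limsup_{T\to\infty}\frac{1}{T^d}\,\#\bigl\{\cdots>R\bigr\}\leq\frac{\nu(\overline{\scrA_{R-\epsilon/2}})}{\zeta(d)}.
\]
Applying the measure bounds recorded above with $R$ replaced by $R\pm\epsilon/2$ and $\epsilon$ by $\epsilon/2$ gives $\nu(\scrA_{R+\epsilon/2}^\circ)\geq\vol(\scrD)\Psi_d(R+\epsilon)$ and $\nu(\overline{\scrA_{R-\epsilon/2}})\leq\vol(\scrD)\Psi_d(R-\epsilon)$, and the two claimed inequalities follow.

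I do not foresee a genuine obstacle, since every step quotes a result already in place. The only piece of bookkeeping is the $\epsilon/2{+}\epsilon/2$ accounting just described, together with a brief remark that $\scrA_{R\pm\epsilon/2}$ indeed sits inside $\scrM_\scrD$ so that Theorem~\ref{equiThm2b} applies verbatim.
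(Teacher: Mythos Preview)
Your proposal is correct and follows exactly the route the paper indicates: the paper's ``proof'' of this lemma is merely the sentence ``combining these inequalities with Theorem~\ref{equiThm2b} and Lemma~\ref{lamel}, we obtain the following,'' and your $\epsilon/2+\epsilon/2$ bookkeeping is precisely the natural way to make that combination rigorous. The remark that $\eta T>2$ forces $\hatZZ^d\cap T\scrD\subset\hatZZ_{\geq 2}^d$ is also the right way to reconcile the two counting conventions.
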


With this lemma, the proof of Theorem \ref{mainThm} is complete if we can show that $\Psi_d(R)$ is continuous (since then the $\limsup$ and $\liminf$ must coincide). This will be proved in Section \ref{secCon}.

\section{Lattice free domains and covering radii \label{secLat}}

We denote the standard basis vectors in $\RR^{d-1}$ by $\vece_1=(1,0,\ldots,0),\ldots,\vece_{d-1}=(0,\ldots,0,1)$.
Consider the simplex \eqref{simplex} and denote the face perpendicular to $\vece_i$ by $\Delta_i$ ($i=1,\ldots,d-1$), and by $\Delta_d$ the face perpendicular to $\vece$.

Recall from the previous section:
\begin{equation}
		V(A) = \sup_{\veczeta\in\TT^{d-1}} \min \big( (\ZZ^{d-1}+\veczeta) A\cap \RR_{\geq 0}^{d-1}\big) \cdot\vece .
\end{equation}

The following lemma states, that the simplex $\Delta$, enlarged by a factor of $V(A)$ and suitably translated, is a maximal lattice free domain; cf.~also \cite{Scarf93}.

\begin{lem}\label{lemSta}
If $V(A) = R$ for some $R>0$, then there is a vector $\veczeta\in\RR^{d-1}$ such that 
\begin{enumerate}
	\item[(i)] $\ZZ^{d-1}A\cap (R \Delta^\circ+\veczeta) = \emptyset$;
	\item[(ii)] $\ZZ^{d-1}A\cap (R\Delta_i^\circ+\veczeta) \neq \emptyset$ for all $i=1,\ldots,d$.
\end{enumerate}
On the other hand, if {\rm (i)} and {\rm (ii)} hold for some $R>0$, $\veczeta\in\RR^{d-1}$, then $R\leq V(A)$.
\end{lem}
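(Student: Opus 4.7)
I show that (i) alone already implies $V(A)\geq R$. For $\epsilon>0$, let $\veceta_\epsilon\in\TT^{d-1}$ be such that $(\ZZ^{d-1}+\veceta_\epsilon)A=\ZZ^{d-1}A-\veczeta-\epsilon\vece$. If some $\vecm\in\ZZ^{d-1}$ had $(\vecm+\veceta_\epsilon)A\in\RR_{\geq 0}^{d-1}$ and $(\vecm+\veceta_\epsilon)A\cdot\vece<R-(d-1)\epsilon$, then $\vecm A-\veczeta$ would be strictly positive in every coordinate with $\vece$-sum $<R$, placing $\vecm A$ in $R\Delta^\circ+\veczeta$ in violation of (i). Hence $\min((\ZZ^{d-1}+\veceta_\epsilon)A\cap\RR_{\geq 0}^{d-1})\cdot\vece\geq R-(d-1)\epsilon$, so $V(A)\geq R-(d-1)\epsilon$; sending $\epsilon\downarrow 0$ completes the converse. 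Note that (ii) plays no role here.

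\textbf{Direct direction.} Assume $V(A)=R>0$. Take a maximising sequence $\veceta_n\in\TT^{d-1}$ with $f(\veceta_n):=\min((\ZZ^{d-1}+\veceta_n)A\cap\RR_{\geq 0}^{d-1})\cdot\vece\to R$, pass to a convergent subsequence $\veceta_n\to\veceta_\infty$ by compactness of $\TT^{d-1}$, and set $\veczeta:=-\veceta_\infty A\in\RR^{d-1}$ together with $\veczeta_n:=-\veceta_n A$. For (i): if $\vecp A\in R\Delta^\circ+\veczeta$, then by continuity $\vecp A-\veczeta_n$ is strictly positive in each coordinate with $\vece$-sum $<R$ for all $n$ large, so $f(\veceta_n)\leq(\vecp A-\veczeta_n)\cdot\vece$ is eventually strictly below $R$, contradicting $f(\veceta_n)\to R$. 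For (ii), I argue by contradiction: suppose $R\Delta_{i_0}^\circ+\veczeta$ contains no lattice point for some $i_0\in\{1,\ldots,d\}$. If $i_0<d$, consider the perturbation $(\veczeta-\delta\vece_{i_0},\,R+\delta)$ for small $\delta>0$; unpacking shows that (i) at this new point reduces to excluding lattice points $\vecm A$ with $(\vecm A-\veczeta)_{i_0}\in(-\delta,0]$, $(\vecm A-\veczeta)_j>0$ for $j\neq i_0$, and $(\vecm A-\veczeta)\cdot\vece<R$. The endpoint $(\vecm A-\veczeta)_{i_0}=0$ is forbidden by the facet assumption, while the remaining candidates lie in a bounded region of $\RR^{d-1}$ and are therefore finitely many, with $|(\vecm A-\veczeta)_{i_0}|$ bounded below by some $\alpha>0$; choosing $\delta<\alpha$ makes them vacuous. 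Thus (i) holds at $(\veczeta-\delta\vece_{i_0},R+\delta)$, and the converse direction gives $V(A)\geq R+\delta>R$---contradicting $V(A)=R$. For $i_0=d$ the same argument applies without translation: one merely scales $R\to R+\delta$ and uses (i) together with the absence of lattice points in $R\Delta_d^\circ+\veczeta$.

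\textbf{Main obstacle.} The delicate step is the finiteness-plus-discreteness pigeonhole in the proof of (ii): once the relative interior of a facet is lattice-free, lattice points in a thin strip adjacent to it are automatically excluded for $\delta$ sufficiently small, which drives the contradiction. Structurally, (i) and (ii) together assert that $R\Delta+\veczeta$ is a maximal lattice-free translate of the simplex (cf.\ \cite{Scarf93}), and this is what makes $R=V(A)$ correspond geometrically to a specific $\veczeta\in\RR^{d-1}$.
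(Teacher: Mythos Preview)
Your proof is correct and follows essentially the same strategy as the paper's: establish (i) by a compactness/maximisation argument, then derive (ii) by observing that an empty facet permits enlarging the simplex to a strictly larger lattice-free translate, contradicting $V(A)=R$ via the converse direction. The paper's own proof is considerably more terse---it asserts ``then there exists a larger translate $R'\Delta^\circ+\veczeta'$ which is lattice free'' without spelling out the discreteness/finiteness step you supply, and it treats the converse as ``evident''; your version makes these points explicit and correctly observes that (i) alone suffices for the converse.
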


\begin{proof}
If $\ZZ^{d-1}A\cap (R \Delta^\circ+\veczeta) \neq \emptyset$ for all $\veczeta$, then $V(A)<R$, contradicting our assumption $V(A)=R$. Hence there exists $\veczeta$ such that (i) holds.  If $\ZZ^{d-1}A\cap (R\Delta_i^\circ+\veczeta) = \emptyset$ for some $i$, then there exists a larger translate $R' \Delta^\circ+\veczeta'$ (for some $R'>R$, $\veczeta'\in\RR^{d-1}$) which is lattice free, and hence $V(A)\geq R' >R$. This proves (ii), and the final statement is evident.
\end{proof}

\begin{thm}
Denote by $\rho(A)$ the covering radius of the simplex $\Delta$ with respect to the lattice $\ZZ^{d-1}A$. Then
\begin{equation}
	\rho(A)=V(A).
\end{equation}
\end{thm}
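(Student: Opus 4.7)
The plan is to rewrite the condition $V(A) \leq R$ as the covering statement $\ZZ^{d-1}A + R\Delta = \RR^{d-1}$; by the definition of the covering radius this will immediately yield $V(A) = \rho(A)$.

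First I would dispose of the $\minplu$. For every $\veczeta \in \TT^{d-1}$, the set $\{\vecn \in \ZZ^{d-1} : (\vecn+\veczeta)A \in \RR_{\geq 0}^{d-1}\}$ is nonempty, because $\RR_{\geq 0}^{d-1} A^{-1}$ is a full-dimensional convex cone in $\RR^{d-1}$ and therefore meets every coset $\veczeta + \ZZ^{d-1}$. Since $(\vecn+\veczeta)A\cdot\vece \geq 0$ on this set, $\minplu$ coincides with an ordinary minimum, and hence
\begin{equation*}
V(A) \leq R \iff \forall\,\veczeta\in\TT^{d-1}\ \exists\,\vecn\in\ZZ^{d-1}:\ (\vecn+\veczeta)A \in R\Delta.
\end{equation*}

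Next I would convert this into a covering statement. Substituting $\vecw = \veczeta A$, as $\veczeta$ runs over $\TT^{d-1}$ the point $\vecw$ runs over a fundamental domain of the lattice $\ZZ^{d-1}A$ in $\RR^{d-1}$. The condition $(\vecn+\veczeta)A \in R\Delta$ reads $\vecw \in R\Delta - \vecn A$, and using the lattice symmetry $-\ZZ^{d-1}A = \ZZ^{d-1}A$ the union over $\vecn$ is $R\Delta + \ZZ^{d-1}A$. Because this set is invariant under translation by $\ZZ^{d-1}A$, coverage of a fundamental domain is equivalent to coverage of all of $\RR^{d-1}$, giving
\begin{equation*}
V(A) \leq R \iff \ZZ^{d-1}A + R\Delta = \RR^{d-1}.
\end{equation*}
Compactness of $\Delta$ shows that the right-hand set of admissible $R$ is closed and equals $[\rho(A),\infty)$, while the left is $[V(A),\infty)$. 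Taking infima yields $V(A) = \rho(A)$.

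The argument is essentially bookkeeping and I do not anticipate any real obstacle. The only subtle point is the nonemptiness of the $\minplu$-set, handled by the cone observation. A conceptual alternative is to read off the two inequalities directly from Lemma \ref{lemSta}: part (i) furnishes an open translate of $R\Delta$ containing no point of $\ZZ^{d-1}A$, which (after the sign flip $-\ZZ^{d-1}A = \ZZ^{d-1}A$) witnesses $\rho(A) \geq V(A)$; combining (i) with the boundary-touching points in (ii) forces $\rho(A) \leq V(A)$ by maximality.
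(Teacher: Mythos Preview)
Your main argument is correct and is actually more direct than the paper's: you establish the biconditional $V(A)\leq R \iff \ZZ^{d-1}A + R\Delta = \RR^{d-1}$ straight from the definitions, after the cone observation disposes of the $\minplu$. Taking infima over $R$ then gives $V(A)=\rho(A)$ immediately; the closedness remark is not even strictly needed, since $\inf\{R:V(A)\leq R\}=V(A)$ trivially and the other infimum is $\rho(A)$ by definition.

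The paper takes a slightly different route, going through Lemma~\ref{lemSta} (the maximal lattice-free simplex characterization, following Scarf). It argues by contradiction: if $\ZZ^{d-1}A + V(A)\Delta$ fails to cover, one finds a lattice-free translate of the closed simplex $V(A)\Delta$, which forces $V(A)>V(A)$; and conversely, for any $R'<V(A)$ there is a lattice-free translate of $R'\Delta$, so $\rho(A)>R'$. Your ``conceptual alternative'' in the last paragraph is essentially this argument. What your direct approach buys is that it bypasses Lemma~\ref{lemSta} entirely; what the paper's approach buys is the explicit link to maximal lattice-free bodies, which is of independent interest for the geometric interpretation.
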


\begin{proof}
(We adapt the argument of \cite[Theorem 2]{Scarf93}.) Let $V(A)=R$ and assume $\ZZ^{d-1}A+R\Delta\neq\RR^{d-1}$. Then there is $\vecxi\in\RR^{d-1}$ such that $\vecxi+\vecv\notin R\Delta$ for all $\vecv\in\ZZ^{d-1}A$. Hence $\ZZ^{d-1}A\cap (R\Delta-\vecxi) = \emptyset$, and, by Lemma \ref{lemSta}, $V(A)>R$; a contradiction. This shows $\rho(A)\leq V(A)$.

On the other hand, again by Lemma \ref{lemSta}, for any $R'<R=V(A)$ there exists $\veczeta\in\RR^{d-1}$ such that $\ZZ^{d-1}A\cap (R'\Delta+\veczeta) = \emptyset$, and hence no element of $\ZZ^{d-1}A$ is covered by the translates of $R'\Delta+\veczeta$. This proves $\rho(A)>R'$ and hence $\rho(A)=V(A)$.
\end{proof}

\section{Continuity of the limit distribution \label{secCon}}

The following lemma shows that $\Psi_d(R)$ is continuous.

\begin{lem}\label{0lem}
For every $R>0$,
\begin{equation}\label{zeromu}
	\mu_0 \big(\big\{ A\in\GamGG : V(A)=R \big\}\big) = 0.
\end{equation}
\end{lem}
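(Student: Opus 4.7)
My approach is to parametrize the level set $\{V = R\}$ via Lemma \ref{lemSta} and cover it by a countable collection of Haar-null subsets of $G_0$. For each $A \in G_0$ with $V(A) = R$, Lemma \ref{lemSta} yields $\veczeta \in \RR^{d-1}$ and lattice vectors $\vecn_1, \ldots, \vecn_d \in \ZZ^{d-1}$ with $\vecn_i A - \veczeta \in R\Delta_i$ for every $i$ (it is enough to know containment in the closed facet). Since $(\vecn_1, \ldots, \vecn_d)$ ranges over the countable set $(\ZZ^{d-1})^d$, it will suffice to show that, for each fixed tuple, the stratum
\[
	S := \bigl\{ A \in G_0 : \exists\, \veczeta \in \RR^{d-1} \text{ with } \vecn_i A - \veczeta \in R\Delta_i \text{ for all } i = 1, \ldots, d \bigr\}
\]
has Haar measure zero in $G_0$. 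Countable subadditivity, together with the left $\Gamma_0$-invariance of $V$ (established in the proof of continuity above), will then give \eqref{zeromu} upon descending to $\GamGG$.

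Next I would translate the facet incidences into equations. The facet $\Delta_i$ ($i < d$) lies in the coordinate hyperplane $\{x_i = 0\}$, so $\vecn_i A - \veczeta \in R\Delta_i$ forces $\veczeta_i = (\vecn_i A)_i$. The facet $\Delta_d$ lies in $\{\vecx \cdot \vece = 1\}$, so the $i = d$ condition forces $(\vecn_d A - \veczeta) \cdot \vece = R$. Substituting the first $d - 1$ identities into the $d$-th eliminates $\veczeta$ entirely and reduces the whole system to the single linear constraint on $A$,
\[
	L(A) := \sum_{i=1}^{d-1} \bigl((\vecn_i - \vecn_d) A\bigr)_i = -R,
\]
the coefficient of the matrix entry $A_{ji}$ being $(\vecn_i - \vecn_d)_j$. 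Consequently $S \subset G_0 \cap L^{-1}(-R)$.

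The main obstacle is to verify that $L$ is a non-zero linear functional, so that its level set meets $G_0 = \SL(d-1,\RR)$ in a proper codimension-one subset. Vanishing of $L$ is equivalent to $\vecn_1 = \cdots = \vecn_d$, but in that case the single point $\vecn_1 A - \veczeta$ would have to lie on every facet of $R\Delta$ simultaneously, which is impossible because each vertex of $\Delta$ belongs to only $d-1$ of its $d$ facets and therefore $\bigcap_{i=1}^d \Delta_i = \emptyset$. Hence $L \neq 0$, and a short transversality check (the only linear functional annihilating $T_A G_0 = \{B : \tr(A^{-1} B) = 0\}$ at every $A \in G_0$ is the zero functional) shows that $L|_{G_0}$ is non-constant. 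Therefore $L^{-1}(-R) \cap G_0$ is a real-analytic subvariety of positive codimension in $G_0$, and in particular Haar-null. Taking the countable union over all $(\vecn_1, \ldots, \vecn_d) \in (\ZZ^{d-1})^d$ then yields \eqref{zeromu}.
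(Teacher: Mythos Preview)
Your proposal is correct and follows essentially the same route as the paper: cover $\{V=R\}$ by countably many strata indexed by $(\vecn_1,\ldots,\vecn_d)\in(\ZZ^{d-1})^d$ via Lemma~\ref{lemSta}, eliminate $\veczeta$ from the facet incidences to obtain the single linear constraint $\tr(LA)=R$ with $L$ having rows $\vecn_d-\vecn_i$, and show this cuts out a Haar-null subset of $G_0$. The only differences are cosmetic: for $L=0$ the paper simply observes $\tr(0\cdot A)=0\neq R$ rather than appealing to $\bigcap_i\Delta_i=\emptyset$, and for $L\neq 0$ the paper checks the differential of $A\mapsto\tr(LA)$ directly (it vanishes only where $LA$ is scalar) instead of invoking real-analyticity.
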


\begin{proof}
By Lemma \ref{lemSta} (ii), the set $\big\{ A\in G_0 : V(A)=R \big\}$ is a subset of
\begin{equation}
	\bigcup_{\vecn_1,\ldots,\vecn_d\in\ZZ^{d-1}} \big\{ A\in G_0 : \text{there exists $\veczeta\in\RR^{d-1}$ such that } \vecn_i A \cap (R\Delta_i^\circ+\veczeta) \neq\emptyset  \;(i=1,\ldots,d) \big\}.
\end{equation}
We therefore need to show that each set in the above union has $\mu_0$-measure zero. Since the sets $R\Delta_i^\circ$ are contained in the respective hyperplanes $\vece_i\cdot\vecy=0$ (for $i=1,\ldots,d-1$) and $\vece\cdot\vecy=R$ (for $i=d$), it suffices to show that
\begin{multline}\label{jetset}
	\big\{ A\in G_0 :  \text{there exists $\veczeta=(\zeta_1,\ldots,\zeta_{d-1})\in\RR^{d-1}$ such that } \\ \vece_i\cdot\vecn_i A = \zeta_i \; (i=1,\ldots,d-1),\; \vece\cdot\vecn_d A= R+\vece\cdot\veczeta \big\} 
\end{multline}
has measure zero. Evidently \eqref{jetset} equals
\begin{equation}\label{jetset2}
\bigg\{ A\in G_0 :   \vece\cdot\vecn_d A= R + \sum_{i=1}^{d-1}\vece_i\cdot\vecn_i A \bigg\}
= \bigg\{ A\in G_0 :   \tr(LA)= R \bigg\},
\end{equation}
with the matrix
\begin{equation}
	L=\begin{pmatrix} \vecn_d-\vecn_1 \\ \vdots \\ \vecn_d-\vecn_{d-1} \end{pmatrix} .
\end{equation}
If $L=0$ the set \eqref{jetset2} is empty (since $R>0$) and hence has measure zero. If $L\neq 0$ then the set \eqref{jetset2} is a submanifold of codimension one; note that the map $G_0\to\RR$, $A\mapsto \tr(LA)$, has non-vanishing differential except at the (at most two) points $A\in G_0$ for which $LA$ is proportional to the identity matrix. Hence the set \eqref{jetset2} has measure zero also in this case and the proof is complete.
\end{proof}

\section*{Acknowledgements}

I thank Alex Gorodnik, Han Li, Andreas Str\"ombergsson and the referee for their comments on the first drafts of this paper. I am especially grateful to Andreas Str\"ombergsson for pointing out a gap in the proof of Lemma \ref{0lem}, and for providing a much simpler alternative. 

\begin{appendix}

\section{The distribution of sublattices}\label{AppA}

Sections \ref{secFarey} and \ref{secVariant} establish the equidistribution of Farey sequences embedded in large horospheres. These results provide an alternative perspective on Schmidt's work on the distribution of sublattices of $\ZZ^d$ \cite{Schmidt98}. In the present appendix, we will reformulate Theorems \ref{equiThm2} and \ref{equiThm2b} in a form that clarifies the relationship between the two approaches.

Let us fix a piecewise continuous map $K: \S_1^{d-1}\to G$ of the unit sphere $\S_1^{d-1}$ such that $\vecy K(\vecy)=(\vecnull,1)$. By {\em piecewise continuous} we mean here: there is a partition of $\S_1^{d-1}$ by subsets $\scrP_i$ with boundary of Lebesgue measure zero, so that $K$ restricted to $\scrP_i$ can be extended to a continuous map on the closure $\overline\scrP_i$.

We extend the definition of $K$ to $\RR^d\setminus\{\vecnull\}\to G$ by setting
\begin{equation}\label{K}
	K(\vecy)= K(\cvecy)D(\|\vecy\|)^{-1} 
\end{equation}
with $D$ as in \eqref{Ddef} and $\cvecy:=\vecy/\|\vecy\|$. The extended map still satisfies $\vecy K(\vecy)=(\vecnull,1)$. 

As in Remark \ref{remy}, we choose $\gamma\in\Gamma$ such that $\veca\gamma=(\vecnull,1)$. Then $(\vecnull,1)\gamma^{-1} K(\veca)=(\vecnull,1)$, which implies $\gamma^{-1} K(\veca)\in H$, and hence
$\Gamma K(\veca) \in \Gamma\backslash\Gamma H$.

\begin{thm}\label{equiThm2S}
Fix a piecewise continuous embedding $K: \RR^d\setminus\{\vecnull\}\to G$ as defined above. Let $\scrD\subset\RR^d$ be bounded with boundary of Lebesgue measure zero, and $f:\overline\scrD\times\Gamma\backslash\Gamma H\to\RR$ bounded continuous. Then
\begin{equation}\label{eqThm2S}
	\lim_{T\to\infty} \frac{1}{T^d} \sum_{\veca\in\hatZZ^d\cap T\scrD} f\bigg(\frac{\veca}{T}, K(\veca) \bigg)  
	= \frac{1}{\zeta(d)} \int_{\scrD\times\GamH} f\big(\vecy,M\big) \, d\vecy \, d\mu_H(M)  .
\end{equation}

\end{thm}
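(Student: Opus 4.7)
The plan is to deduce Theorem~\ref{equiThm2S} from Theorem~\ref{equiThm2b} by re-coordinatizing the equidistribution statement via the bijection~\eqref{bije}. The key observation is that the points $(\veca/T,\Gamma n_-(\uveca)D(T))\in\scrM_\scrD$ appearing in Theorem~\ref{equiThm2b}, when transported via~\eqref{bije} to $\overline\scrD\times(\Gamma_H\backslash H)\cong\overline\scrD\times(\Gamma\backslash\Gamma H)$, correspond to the images of $\veca$ under one particular continuous embedding, namely $K_0(\vecy):=\trans(n_-(\vecy'/y_d)D(y_d))^{-1}$ (defined wherever $y_d>0$ and satisfying $\vecy K_0(\vecy)=(\vecnull,1)$). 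This is visible from the identity $\trans(n_-(\uveca)D(T))^{-1}=K_0(\veca)D(a_d/T)$ together with the structure of~\eqref{bije}. Hence Theorem~\ref{equiThm2S} for the choice $K=K_0$, restricted to $\scrD\subset\{y_d>0\}$, is just a relabelling of Theorem~\ref{equiThm2b}.

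For a general piecewise continuous $K$, the strategy is to first reduce to a situation where $K$ is continuous on $\overline\scrD$ and $\scrD\subset\{y_d>0\}$. The piecewise continuity of $K:\S_1^{d-1}\to G$ combined with the radial homogeneity $K(\vecy)=K(\cvecy)D(\|\vecy\|)^{-1}$ makes $K$ continuous on each open cone $\cc(\scrP_i)=\RR_{>0}\scrP_i$ associated with a partition piece $\scrP_i$. Since both the boundaries of the $\scrP_i$'s and the boundary of $\scrD$ have Lebesgue measure zero, the lattice-point contributions from neighbourhoods of these boundaries are $o(T^d)$ by standard Siegel-type upper bounds. For pieces where $y_d$ vanishes or changes sign, perform a change of variables $\veca\mapsto\veca\gamma$ with a suitable signed permutation $\gamma\in\Gamma$ so that a different non-vanishing coordinate plays the role of $y_d$; the analogous rotated canonical embedding replaces $K_0$.

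On a piece where $\scrD\subset\{y_d>0\}$ and $K$ is continuous, define $h:\overline\scrD\to H$ by $h(\vecy):=K_0(\vecy)^{-1}K(\vecy)$. The identities $\vecy K_0(\vecy)=\vecy K(\vecy)=(\vecnull,1)$ force $h(\vecy)\in H$, and $h$ is continuous. Now apply the $K_0$-version of the theorem to the bounded continuous test function
\begin{equation*}
\tilde f(\vecy,\Gamma M):=f(\vecy,\Gamma M\,h(\vecy)),\qquad(\vecy,\Gamma M)\in\overline\scrD\times(\Gamma\backslash\Gamma H).
\end{equation*}
The left-hand side of the resulting limit is $\frac{1}{T^d}\sum_\veca f(\veca/T,\Gamma K_0(\veca)h(\veca))=\frac{1}{T^d}\sum_\veca f(\veca/T,\Gamma K(\veca))$, which is the desired sum. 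The right-hand side carries $\int_{\GamH}f(\vecy,\Gamma M\,h(\vecy))\,d\mu_H(M)$, which equals $\int_{\GamH}f(\vecy,\Gamma M)\,d\mu_H(M)$ by the right $H$-invariance of $\mu_H$; this invariance is clear from the product formula $d\mu_H=d\mu_0\,d\vecb$ in~\eqref{siegel2} together with the right $G_0$-invariance of $\mu_0$. Summing over pieces yields the claim.

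The main obstacle is the reduction step: partitioning $\scrD$ according to both the piecewise structure of $K$ on $\S_1^{d-1}$ and the hyperplanes $\{y_i=0\}$, then keeping track of the signed-permutation reductions so they fit together consistently. The boundary-measure-zero hypotheses guarantee that the missing sets contribute negligibly to the sum, but verifying this across all pieces and matching the canonical embeddings used on each piece is the bulk of the technical work. A subsidiary point is checking that $\tilde f$, after being pulled back to $\scrM_\scrD$ via~\eqref{bije}, is genuinely continuous on that closed embedded submanifold, so that the bounded continuous form of Theorem~\ref{equiThm2b} established in its proof truly applies.
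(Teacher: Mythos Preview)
Your overall strategy is the same as the paper's: localise to a piece where $K$ is continuous and $y_d>0$, establish the result for a canonical embedding, and pass to general $K$ by right-multiplying by an $H$-valued correction, invoking the right $H$-invariance of $\mu_H$ on the integral side. (The paper applies Theorem~\ref{equiThm2} rather than Theorem~\ref{equiThm2b}, which is more direct for continuous test functions, but this is minor.)

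There is, however, a genuine gap in the passage from $K_0$ to $K$. Evaluating $\tilde f(\vecy,\Gamma M)=f(\vecy,\Gamma M\,h(\vecy))$ at $(\veca/T,\Gamma K_0(\veca))$ yields $f\bigl(\veca/T,\Gamma K_0(\veca)\,h(\veca/T)\bigr)$, not $f\bigl(\veca/T,\Gamma K_0(\veca)\,h(\veca)\bigr)=f\bigl(\veca/T,\Gamma K(\veca)\bigr)$: the test function only sees $\veca/T$, so it can supply $h(\veca/T)$ but not $h(\veca)$. These agree only if $h$ is homogeneous of degree zero, but since both embeddings scale as $K(\lambda\vecy)=K(\vecy)D(\lambda)^{-1}$ one has $h(\lambda\vecy)=D(\lambda)h(\vecy)D(\lambda)^{-1}$. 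Writing $h=\bigl(\begin{smallmatrix}A_h&\trans\vecb_h\\\vecnull&1\end{smallmatrix}\bigr)$, conjugation by $D(\lambda)$ fixes $A_h$ but rescales $\vecb_h$ by $\lambda^{-d/(d-1)}$; a short computation gives $h(\veca)^{-1}h(\veca/T)=n_-\bigl((1-T^{-d/(d-1)})\,\vecb_h\,\trans A_h^{-1}\bigr)$, which is bounded but does \emph{not} tend to the identity as $T\to\infty$. The paper handles exactly this obstruction: it factors $K(\vecy)^{-1}=n_-\bigl(\vecb(\cvecy)\|\vecy\|^{-d/(d-1)}\bigr)\bigl(\begin{smallmatrix}A(\cvecy)&\trans\vecnull\\\vecnull&1\end{smallmatrix}\bigr)M_\vecy$, drops the first factor (within $O(T^{-d/(d-1)})$ of the identity since $\|\veca\|\gtrsim T$, hence negligible by uniform continuity of $f$), and absorbs the remaining block-diagonal factor, which depends only on $\cvecy$ and is therefore genuinely scale-invariant, into the test function $f_0(\vecy,M)=f\bigl(\vecy,M\,D(y_d)\bigl(\begin{smallmatrix}\trans A(\cvecy)&\trans\vecnull\\\vecnull&1\end{smallmatrix}\bigr)\bigr)$. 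The extra $D(y_d)$ is what makes the scales match, via $D(T)D(a_d/T)=D(a_d)$. Your argument is repaired by the same move: replace $h$ by its block-diagonal part and control the discarded $\vecb_h$-piece with a uniform-continuity estimate.
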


\begin{proof}
In view of the fact that $\Gamma\backslash\Gamma H$ is a closed embedded submanifold of $\GamG$, it suffices to prove that, for $f:\overline\scrD\times\GamG\to\RR$ bounded continuous,
\begin{equation}\label{eqThm2Sb}
	\lim_{T\to\infty} \frac{1}{T^d} \sum_{\veca\in\hatZZ^d\cap T\scrD} f\bigg(\frac{\veca}{T}, \trans K(\veca)^{-1} \bigg)  
	= \frac{1}{\zeta(d)} \int_{\scrD\times\GamH} \widetilde f\big(\vecy,M\big) \, d\vecy \, d\mu_H(M)  .
\end{equation}
We may assume without loss of generality that $f$ has compact support (cf.~ Step 0 of the proof of Theorem \ref{equiThm1}), and that $\scrD\subset\{\vecx\in\RR^d: \eta \leq x_1,\ldots,x_{d-1} \leq x_d \}\cap \RR_{> 0}\scrP_i$ for some fixed $\eta>0$ and $\scrP_i$ as defined in the second paragraph of this appendix.

If $\vecy\in\scrD$, then $y_d\geq \eta$, and we may expand
\begin{equation}
	K(\cvecy)^{-1}= \begin{pmatrix} A(\cvecy) & \trans\vecb(\cvecy) \\ \vecnull & 1 \end{pmatrix} M_{\cvecy} ,
\end{equation}
with $M_\vecy$ as in \eqref{My}. The maps $A$, $\vecb$ are continuous on $\overline\scrP_i\cap\RR_{\geq\eta}^d$, and hence bounded. A short calculation shows that
\begin{equation}
\begin{split}
	K(\vecy)^{-1} & = \begin{pmatrix} A(\cvecy) & \trans\vecb(\cvecy)\|\vecy\|^{-d/(d-1)} \\ \vecnull & 1 \end{pmatrix} M_{\vecy} \\
	& = \begin{pmatrix} 1_{d-1} & \trans\vecb(\cvecy)\|\vecy\|^{-d/(d-1)} \\ \vecnull & 1 \end{pmatrix} 
	\begin{pmatrix} A(\cvecy) & \trans\vecnull \\ \vecnull & 1 \end{pmatrix} M_{\vecy} .
\end{split}
\end{equation}
Set
\begin{equation}
	K_0(\vecy)^{-1}= \begin{pmatrix} A(\cvecy) & \trans\vecnull \\ \vecnull & 1 \end{pmatrix} M_{\vecy} .
\end{equation}	
Because $\|\veca\|\geq \sqrt{d}\, \eta T$, we have 
\begin{equation}
	d(\trans K(\veca)^{-1},\trans K_0(\veca)^{-1}) \leq \sup_{\vecy\in\scrD} \big\|\vecb(\cvecy)\big\|\; \big(\sqrt{d}\,\eta T\big)^{-d/(d-1)},
\end{equation}
where the supremum is finite by the continuity of $\vecb$.
Since $f$ is uniformly continuous, it therefore suffices to establish \eqref{eqThm2Sb} with $K(\veca)^{-1}$ replaced by $K_0(\veca)^{-1}$.
We now apply Theorem \ref{equiThm2} with the test function
\begin{equation}
	f_0(\vecy,M)=f\bigg(\vecy, M D(y_d) \begin{pmatrix} \trans A(\cvecy) & \trans\vecnull \\ \vecnull & 1 \end{pmatrix} \bigg) ,
\end{equation}
which is bounded continuous on $\overline\scrD\times \GamG$ (under the above assumptions on $f$ and $\scrD$). With this choice,
\begin{equation}
\begin{split}
	f_0\bigg(\frac{\veca}{T},n_-(\uveca)D(T) \bigg) & =f\bigg(\frac{\veca}{T}, n_-(\uveca)D(T) D(a_d/T) \begin{pmatrix} \trans A(\cveca) & \trans\vecnull \\ \vecnull & 1 \end{pmatrix} \bigg) \\
& =f\bigg(\frac{\veca}{T}, \trans K_0(\veca)^{-1} \bigg).
\end{split}
\end{equation}
As to the right hand side of \eqref{eqThm2}, we have
\begin{equation}
\begin{split}
	\widetilde f_0(\vecy,M D(y_d)) & = f_0\big(\vecy,\trans M^{-1} D(y_d)^{-1}\big)  \\
	& = f\bigg(\vecy, \trans M^{-1} D(y_d)^{-1} D(y_d) \begin{pmatrix} \trans A(\cvecy) & \trans\vecnull \\ \vecnull & 1 \end{pmatrix}\bigg) \\
	& = \widetilde f\bigg(\vecy, M \begin{pmatrix} A(\cvecy)^{-1} & \trans\vecnull \\ \vecnull & 1 \end{pmatrix}\bigg).
\end{split}
\end{equation}
Eq.~\eqref{eqThm2Sb} now follows from the right $H$-invariance of $\mu_H$.
\end{proof}

The following theorem is a corollary of Theorem \ref{equiThm2S}; the proof is analogous to that of Theorem \ref{equiThm2b}.

\begin{thm}\label{equiThm2bS}
Fix a piecewise continuous embedding $K: \RR^d\setminus\{\vecnull\}\to G$ as defined above. 
Let $\scrD\subset\RR^d$ be bounded with boundary of Lebesgue measure zero, and $\scrA\subset\overline\scrD\times\Gamma\backslash\Gamma H$. Then
\begin{equation}\label{eqThm2bS}
	\liminf_{T\to\infty} \frac{1}{T^d} \#\bigg\{ \veca\in\hatZZ^d : \bigg(\frac{\veca}{T}, \Gamma  K(\veca) \bigg)\in\scrA\bigg\} 
	\geq \frac{(\vol\times\mu_H)\big(\scrA^\circ \big)}{\zeta(d)}   
\end{equation}
and
\begin{equation}\label{eqThm2bbS}
	\limsup_{T\to\infty} \frac{1}{T^d} \#\bigg\{ \veca\in\hatZZ^d : \bigg(\frac{\veca}{T}, \Gamma K(\veca) \bigg)\in\scrA\bigg\}  
	\leq \frac{(\vol\times\mu_H)\big(\overline{\scrA} \big)}{\zeta(d)}   .
\end{equation}
\end{thm}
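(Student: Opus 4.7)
The plan is to deduce Theorem \ref{equiThm2bS} from Theorem \ref{equiThm2S} by exactly the same probabilistic sandwich argument (\cite[Chapter III]{Shiryaev}) that was used to obtain Theorem \ref{equiThm0b} from Theorem \ref{equiThm0}, and Theorem \ref{equiThm2b} from Theorem \ref{equiThm2}. The key observation, provided by Lemma \ref{embedLem}, is that $\Gamma\backslash\Gamma H$ is a closed embedded submanifold of $\GamG$, so $\overline\scrD\times\Gamma\backslash\Gamma H$ is a locally compact Hausdorff space carrying the finite Borel measure $\vol\times\mu_H$ (restricted to $\overline\scrD\times\Gamma\backslash\Gamma H$ cut off by a suitable compact exhaustion, say). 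Moreover, since $\veca\in T\scrD$ forces $\veca/T\in\overline\scrD$, and $\Gamma K(\veca)\in\Gamma\backslash\Gamma H$ by the remark preceding Theorem \ref{equiThm2S}, the point $(\veca/T,\Gamma K(\veca))$ indeed lies in $\overline\scrD\times\Gamma\backslash\Gamma H$ whenever it contributes to the count.

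For the liminf bound \eqref{eqThm2bS}, fix $\epsilon>0$ and use inner regularity of $\vol\times\mu_H$ to find a compact set $\scrC\subset\scrA^\circ$ (closed in $\overline\scrD\times\Gamma\backslash\Gamma H$) with $(\vol\times\mu_H)(\scrC)\geq (\vol\times\mu_H)(\scrA^\circ)-\epsilon$. By Urysohn's lemma applied in the locally compact Hausdorff space $\overline\scrD\times\Gamma\backslash\Gamma H$, there is a bounded continuous function $f:\overline\scrD\times\Gamma\backslash\Gamma H\to[0,1]$ with $f\equiv 1$ on $\scrC$ and $\supp f\subset\scrA^\circ$. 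Then $f\leq \chi_\scrA$ pointwise, so
\begin{equation*}
\frac{1}{T^d}\#\bigg\{\veca\in\hatZZ^d:\bigg(\frac{\veca}{T},\Gamma K(\veca)\bigg)\in\scrA\bigg\}\;\geq\;\frac{1}{T^d}\sum_{\veca\in\hatZZ^d\cap T\overline\scrD} f\bigg(\frac{\veca}{T},\Gamma K(\veca)\bigg),
\end{equation*}
and taking liminf, then applying Theorem \ref{equiThm2S} to the right-hand side (and letting $\epsilon\to 0$), yields \eqref{eqThm2bS}.

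For the limsup bound \eqref{eqThm2bbS}, argue dually: for $\epsilon>0$, use outer regularity to pick a relatively compact open $\scrU\supset\overline\scrA$ (in $\overline\scrD\times\Gamma\backslash\Gamma H$) with $(\vol\times\mu_H)(\scrU)\leq (\vol\times\mu_H)(\overline\scrA)+\epsilon$, and choose a bounded continuous $g:\overline\scrD\times\Gamma\backslash\Gamma H\to[0,1]$ with $g\equiv 1$ on $\overline\scrA$ and $\supp g\subset\scrU$. Then $\chi_\scrA\leq g$, so applying Theorem \ref{equiThm2S} to $g$ and letting $\epsilon\to 0$ gives \eqref{eqThm2bbS}.

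The main point requiring care, rather than an actual obstacle, is the application of Urysohn's lemma and regularity of $\vol\times\mu_H$ on $\overline\scrD\times\Gamma\backslash\Gamma H$ rather than on $\overline\scrD\times\GamG$; this is handled by Lemma \ref{embedLem}, which ensures $\Gamma\backslash\Gamma H$ is a closed embedded (hence locally compact Hausdorff) submanifold, on which continuous compactly supported functions extend continuously (by Tietze) to $\GamG$ if one prefers to stay within the ambient setting. Everything else is a verbatim repetition of the Portmanteau-style argument already used twice in the paper.
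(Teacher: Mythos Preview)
Your proposal is correct and follows exactly the approach the paper intends: the paper's own proof is simply the remark that Theorem \ref{equiThm2bS} follows from Theorem \ref{equiThm2S} by the same Portmanteau-type argument from \cite[Chapter III]{Shiryaev} used for Theorem \ref{equiThm2b}, and you have spelled this out. One small quibble: in the limsup step you ask for a \emph{relatively compact} open $\scrU\supset\overline\scrA$, but $\overline\scrA$ need not be compact---just drop that adjective, since $\overline\scrD\times\Gamma\backslash\Gamma H$ is $\sigma$-compact (hence normal) and $\vol\times\mu_H$ is finite, so outer regularity and Urysohn give you a bounded continuous $g$ without any compactness requirement on $\scrU$.
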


Let us now explain how the above statements are related to Schmidt's results on the distribution of primitive sublattices \cite{Schmidt98}. 

Two lattices $\Lambda,\Lambda'\subset\RR^d$ of rank $m$ are called {\em similar}, if there is an invertible angle-preserving linear transformation $R$ (that is, $R\in\RR_{>0}\OOO(d)$), such that $\Lambda'=\Lambda R$.

Let us denote by $\Gr_m(\RR^d)$ the Grassmannian of $m$-dimensional linear subspaces of $\RR^d$. The map
\begin{equation}
	\hatZZ^d \to \Gr_{d-1}(\RR^d),\qquad \veca \mapsto \veca^\perp :=\{ \vecx\in\RR^d: \vecx\cdot\veca=0 \}
\end{equation}
gives a one-to-one correspondence between primitive lattice points and rational subspaces of dimension $d-1$. 
A {\em primitive sublattice of $\ZZ^d$ of rank $d-1$} is defined as
\begin{equation}
	\Lambda_\veca = \ZZ^d \cap \veca^\perp ,
\end{equation}
and hence there is a one-to-one correspondence between primitive lattice points and primitive sublattices of rank $d-1$. The covolume of $\Lambda_\veca$ equals $\|\veca\|$. 
Note that 
\begin{equation}
	\veca^\perp \trans K(\veca)^{-1}= (\vecnull,1)^\perp=\RR^{d-1}\times\{0\} ,
\end{equation}
with $K(\veca)$ as in \eqref{K}. Hence
\begin{equation}
	\Lambda_\veca\trans K(\veca)^{-1} =\ZZ^d \trans K(\veca)^{-1} \cap (\RR^{d-1}\times\{0\})
\end{equation}
and
\begin{equation}\label{simmer}
	\Lambda_\veca\trans K(\veca)^{-1} =\|\veca\|^{-1/(d-1)} \Lambda_\veca \trans K(\cveca)^{-1}.
\end{equation}
We now choose the above embedding $K$ such that $K(\cvecy)\in\SO(d)$; see e.g. \cite[Section 4.2, footnote 3]{partI} for an explicit construction. 
The map
\begin{equation}
	\Lambda_\veca \mapsto \Lambda_\veca':=\Lambda_\veca\trans K(\veca)^{-1}
\end{equation}
maps primitive sublattices of $\ZZ^d$ of rank $d-1$ to lattices in $\RR^{d-1}$.
Eq.~\eqref{simmer} shows $\Lambda_\veca$ and $\Lambda_\veca'$ are similar; it furthermore implies that $\Lambda_\veca'$ has covolume one. 

In \cite{Schmidt98} Schmidt proves that, as $T\to\infty$, the set $\{\Lambda_\veca' : \|\veca\|\leq T\}$ becomes uniformly distributed in the space of lattices of covolume one,  $\GamGG$, with respect to the right $G_0$-invariant measure $\mu_0$. 
In particular, Theorem 3 in \cite{Schmidt98} (adapted to the case of primitive lattices of rank $d-1$) follows from our Theorem \ref{equiThm2bS}, if we set
\begin{equation}
	\scrA = \bigg\{ \bigg(\vecy,\Gamma \begin{pmatrix} A & \trans\vecb \\ \vecnull & 1 \end{pmatrix}\bigg) :  \vecy\in\scrD,\; A \in \scrA_0,\;\vecb\in\RR^{d-1} \bigg\}
	\subset\overline\scrD\times\Gamma\backslash\Gamma H,
\end{equation}
where $\scrD\subset\RR^d$ has boundary of Lebesgue measure zero, and $\scrA_0\subset\GamGG$ is arbitrary. Theorem 2 in \cite{Schmidt98} is obtained when $\scrD$ is taken to be the unit ball.

\end{appendix}

\end{document}